\newcommand{\numberseries}{\bfseries}   
\newlength{\thmtopspace}                
\newlength{\thmbotspace}                
\newlength{\thmheadspace}               
\newlength{\thmindent}                  
\newtheoremstyle{bfupright head,slanted body}
                {\thmtopspace}{\thmbotspace}
                {\slshape}{\thmindent}{\bfseries}{.}{\thmheadspace}
                {{\numberseries \thmnumber{#2\;}}\thmnote{#3}}
\newtheoremstyle{bfupright head,upright body}
                {\thmtopspace}{\thmbotspace}
                {\upshape}{\thmindent}{\bfseries}{.}{\thmheadspace}
                {{\numberseries \thmnumber{#2\;}}\thmnote{#3}}
\newtheoremstyle{fixed bf head,slanted body}
                {\thmtopspace}{\thmbotspace}{\slshape}
                {\thmindent}{\bfseries}{.}{\thmheadspace}
                {{\numberseries \thmnumber{#2\;}}\thmname{#1}\thmnote{ (#3)}}
\newtheoremstyle{fixed bf head,upright body}
                {\thmtopspace}{\thmbotspace}{\upshape}
                {\thmindent}{\bfseries}{.}{\thmheadspace}
                {{\numberseries \thmnumber{#2\;}}\thmname{#1}\thmnote{ (#3)}}
\newtheoremstyle{numbered paragraph}
                {\thmtopspace}{\thmbotspace}{\upshape}
                {\thmindent}{\upshape}{}{\thmheadspace}
                {{\numberseries \thmnumber{#2.}}}
\theoremstyle{bfupright head,slanted body}
\newtheorem{res}{}[section]
\theoremstyle{bfupright head,upright body}
\newtheorem{bfhpg}[res]{}               \newtheorem*{bfhpg*}{}
\theoremstyle{fixed bf head,slanted body}
\newtheorem{thm}[res]{Theorem}          \newtheorem*{thm*}{Theorem}
\newtheorem{prp}[res]{Proposition}      \newtheorem*{prp*}{Proposition}
\newtheorem{cor}[res]{Corollary}        \newtheorem*{cor*}{Corollary}
            \newtheorem*{lem*}{Lemma}
\theoremstyle{fixed bf head,upright body}
       \newtheorem*{dfn*}{Definition}
\newtheorem{rmk}[res]{Remark}           \newtheorem*{rmk*}{Remark}
\newtheorem{exa}[res]{Example}          \newtheorem*{exa*}{Example}
\newtheorem{que}[res]{Question}          \newtheorem*{que*}{Question}
\theoremstyle{numbered paragraph}
\newtheorem{ipg}[res]{}
\newenvironment{prf*}[1][Proof]{%
  \begin{proof}[\bf #1]
    \setcounter{equation}{0}
    }
  {\end{proof}
}
\newcommand{\pgref}[1]{\ref{#1}}
\newcommand{\thmref}[2][Theorem~]{#1\pgref{thm:#2}}
\newcommand{\corref}[2][Corollary~]{#1\pgref{cor:#2}}
\newcommand{\prpref}[2][Proposition~]{#1\pgref{prp:#2}}
\newcommand{\exaref}[2][Example~]{#1\pgref{exa:#2}}
\renewcommand{\eqref}[1]{(\pgref{eq:#1})}
\newcommand{\thmcite}[2][?]{\cite[thm.~#1]{#2}}
\newcommand{\rmkcite}[2][?]{\cite[rmk.~#1]{#2}}
\def\urltilda{\kern -.15em\lower .7ex\hbox{\~{}}\kern .04em}
\newcommand{\setof}[3][\mspace{1mu}]{\{#1#2 \mid #3#1\}}
\newcommand{\ZZ}{\mathbb{Z}}
\newcommand{\CC}{\mathbb{C}}
\newcommand{\deq}{\:=\:}
\newcommand{\dge}{\:\ge\:}
\newcommand{\dis}{\:\is\:}
\newcommand{\dqis}{\:\qis\:}
\newcommand{\Supp}{\operatorname{Supp}}
\newcommand{\ttpp}[2]{{\boldsymbol\top}_{#1}{#2}}
\newcommand{\Tr}{\mathsf{Tr}}
\newcommand{\m}{\mathfrak{m}}
\newcommand{\p}{\mathfrak{p}}
\newcommand{\is}{\cong}
\newcommand{\qis}{\simeq}
\renewcommand{\le}{\leqslant}
\renewcommand{\ge}{\geqslant}
\newcommand{\lra}{\longrightarrow}
\newcommand{\xra}[2][]{\xrightarrow[#1]{\;#2\;}}
\newcommand{\qra}{\xra{\qis}}
\newcommand{\Ker}[1]{\nobreak{\operatorname{Ker}#1}}
\newcommand{\Coker}[1]{\nobreak{\operatorname{Coker}#1}}
\renewcommand{\H}[2][]{\operatorname{H}_{#1}(#2)}
\newcommand{\Shift}[2][]{\mathsf{\Sigma}^{#1}{#2}}
\newcommand{\Tsb}[2]{#2_{{\scriptscriptstyle\supset}#1}}
\newcommand{\dptR}{\operatorname{depth}R}
\newcommand{\dpt}[2][R]{\operatorname{depth}_{#1}#2}
\newcommand{\pd}[2][R]{\operatorname{pd}_{#1}#2}
\newcommand{\Hom}[3][R]{\operatorname{Hom}_{#1}(#2,#3)}
\newcommand{\RHom}[3][R]{\operatorname{\mathbf{R}Hom}_{#1}(#2,#3)}
\newcommand{\tp}[3][R]{\nobreak{#2\otimes_{#1}#3}}
\newcommand{\tpp}[3][R]{(\tp[#1]{#2}{#3})}
\newcommand{\Tor}[4][R]{\operatorname{Tor}^{#1}_{#2}(#3,#4)}
\newcommand{\Ext}[4][R]{\operatorname{Ext}_{#1}^{#2}(#3,#4)}
\newcommand{\TExt}[4][R]{\operatorname{\widehat{Ext}}_{#1}^{#2}(#3,#4)}
\DeclareMathOperator{\Ass}{Ass}
\newcommand{\G}{\mathcal{G}}
\newcommand{\Gdim}[2][R]{\operatorname{\G-dim}_{#1}#2}
\newcommand{\Ttor}[4][R]{\smash{\operatorname{\widehat{Tor}}%
  }_{#2}^{#1^{\phantom{|\mspace{-6mu}}}}(#3,#4)}
\newcommand{\GLtp}[3][R]{\nobreak{#2\otimes_{#1}^{\mathbf{GL}}#3}}
\newcommand{\GLtpp}[3][R]{(\GLtp[#1]{#2}{#3})}
\newcommand{\GTor}[4][R]{\operatorname{\G Tor}^{#1}_{#2}(#3,#4)}
\numberwithin{equation}{res}
\begin{document}

\title[Vanishing of relative homology and depth of tensor products ]
{Vanishing of relative homology and depth of tensor products}
\author[Celikbas, Liang, Sadeghi]
{Olgur Celikbas, Li Liang and Arash Sadeghi}

\address{Olgur Celikbas\\
Department of Mathematics \\
West Virginia University\\
Morgantown, WV 26506-6310, U.S.A}
\email{olgur.celikbas@math.wvu.edu}

\address{Li Liang\\
School of Mathematics and Physics, Lanzhou Jiaotong University, Lanzhou 730070, China.}
\email{lliangnju@gmail.com}

\address{Arash Sadeghi\\
School of Mathematics, Institute for Research in Fundamental Sciences, (IPM), P.O. Box:
19395-5746, Tehran, Iran.}
\email{sadeghiarash61@gmail.com}

\thanks{}

\keywords{G-relative homology, Tate homology, depth, tensor products of modules.}

\subjclass[2010]{13D07; 13D02}

\begin{abstract}
  For finitely generated modules $M$ and $N$ over a Gorenstein local
  ring $R$, one has $\dpt[]{M} + \dpt[]{N}= \dpt[]{\tpp{M}{N}} +
  \dptR$, i.e.,  the depth formula holds, if $M$ and $N$ are Tor-independent and Tate homology $\Ttor[]{i}{M}{N}$ vanishes for all $i\in\ZZ$. We establish the same conclusion under weaker hypotheses: if $M$ and $N$ are $\G$-relative Tor-independent, then the vanishing of $\Ttor[]{i}{M}{N}$ for all $i\le 0$ is enough for the depth formula to hold. We also analyze the depth of tensor products of modules and obtain a necessary condition for the depth formula in terms of $\G$-relative homology.
\end{abstract}

\ \vspace{-1\baselineskip}
\maketitle

\thispagestyle{empty}

\section{Introduction}

\noindent
In this paper, $R$ is a commutative Noetherian local ring with unique maximal ideal $\mathfrak{m}$ and residue field $k$, and
$R$-modules are tacitly assumed to be finitely generated.

In 1961 Auslander \cite[1.2]{Au} proved a natural extension of the classical Auslander -- Buchsbaum formula. He showed that, if $M$ has finite projective dimension (e.g., $R$ is regular) and $M$ and $N$ are Tor-independent modules, i.e., $\Tor{i}{M}{N}=0$ for all $i\ge 1$, then the following remarkable equality holds:
\begin{equation*}\tag{1}
\dpt{N}= \dpt{\tpp{M}{N}} + \pd{M}.
\end{equation*}
Notice, for the special case where $N=R$, equality (1) recovers the  Auslander -- Buchsbaum formula, i.e., $\dpt{R}= \dpt{M} + \pd{M}$.

In 1994 Huneke and Wiegand established an important consequence of Tor-independence. They showed equality (1) holds for Tor-independent modules $M$ and $N$ over complete intersection rings $R$ (even if $M$ does not necessarily have finite projective dimension) provided that $\pd{M}$ is replaced by $\dpt{R}-\dpt{M}$. More precisely, Huneke and Wiegand \cite[2.5]{HW} proved, if $R$ is a complete intersection and $M$ and $N$ are Tor-independent modules, then one has:
\begin{equation*}\tag{2}
\dpt{M} + \dpt{N} = \dpt{R}+\dpt{\tpp{M}{N}}.
\end{equation*}
The equality in (2) is dubbed \emph{the depth formula} by Huneke and Wiegand in \cite{HW2}.

The depth formula is an important tool to study depth of tensor products of modules as well as that of complexes \cite{SIn99}. For example if $M$ and $N$ are maximal Cohen-Macaulay modules (i.e., $\dpt{M}= \dpt{N} =\dim{R})$ and the depth formula holds, then $R$ must be Cohen-Macaulay and $M\otimes_{R}N$ is maximal Cohen-Macaulay. In general it is an open question, even over Gorenstein rings, whether or not the depth formula holds for all Tor-independent modules.

There are quite a few extensions of the aforementioned result of Huneke and Wiegand in the literature; see for example \cite{Dao}.  A substantial generalization in this direction  is due to Christensen and Jorgensen: if $M$ has finite $\G$-dimension in the sense of Auslander and Bridger~\cite{MAsMBr69} (e.g., $R$ is Gorenstein), then the vanishing of Tate homology $\Ttor{i}{M}{N}$ for all $i\in \ZZ$ (e.g., $M$ has finite projective dimension) is a sufficient condition for the \emph{derived depth formula} to hold; see \cite[2.3]{LWCDAJ15}. Noting that the derived depth formula coincides with the depth formula for Tor-independent modules $M$ and $N$, we observe:

\begin{thm} (Christensen and Jorgensen; see \cite[5.3]{LWCDAJ15}) \label{thm:recover}
Let $M$ and $N$ be $R$-modules such that $M$ has finite $\G$-dimension. Then the depth formula holds, i.e., $\dpt{M} + \dpt{N} = \dpt{\tpp{M}{N}} + \dptR$, provided the following conditions hold.
\begin{enumerate}[\rm(i)]
\item $\Ttor{i}{M}{N}=0$ for all $i\in\ZZ$.
\item $\Tor{i}{M}{N}=0$ for all $i\ge 1$.
\end{enumerate}
\end{thm}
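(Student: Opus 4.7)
The plan is to deduce this from the derived depth formula of Christensen and Jorgensen, which is the cited result \cite[5.3]{LWCDAJ15}. That result asserts, under hypothesis (i) alone (and the finite $\G$-dimension assumption on $M$), the \emph{derived} depth formula
\[
\dpt[]{M} + \dpt[]{N} \deq \dpt[]{\Ltpp{M}{N}} + \dptR,
\]
where the depth on the right is computed in the derived category (for instance via the definition $\dpt[]{X} = -\sup\setof{i}{\Ext{i}{k}{X}\neq 0}$, which agrees with the classical depth on modules).

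The role of condition (ii) is then purely to replace the derived tensor product by the ordinary one. Indeed, Tor-independence of $M$ and $N$ means that the homology of $\Ltp{M}{N}$ is concentrated in degree zero, where it equals $\tp{M}{N}$. Hence there is a quasi-isomorphism $\Ltp{M}{N}\qra \tp{M}{N}$, and since depth is a derived-category invariant, $\dpt[]{\Ltpp{M}{N}} = \dpt[]{\tpp{M}{N}}$. Substituting this into the derived depth formula yields the desired equality.

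In short, I would just invoke \cite[5.3]{LWCDAJ15} to obtain the derived depth formula from (i), and then use (ii) to identify the derived tensor product with the ordinary tensor product. There is no real obstacle here; the only point requiring attention is confirming that the notion of depth used in the derived depth formula agrees, for a module, with the classical Auslander--Buchsbaum depth, which is a standard fact. This theorem is recorded in the introduction essentially as motivation for the main results of the paper, so its "proof" is really just this citation-plus-reduction argument.
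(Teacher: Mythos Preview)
Your proposal is correct and matches the paper's treatment exactly. The paper does not give a separate proof of this theorem; it is stated in the introduction as a known result, with the sentence immediately preceding it supplying precisely your argument: the derived depth formula from \cite[2.3]{LWCDAJ15} holds under (i), and ``the derived depth formula coincides with the depth formula for Tor-independent modules,'' i.e., condition (ii) lets one replace $\Ltp{M}{N}$ by $\tp{M}{N}$.
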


The main aim of this article is to obtain a new condition that is sufficient for the depth formula to hold. A new tool we use is the \emph{$\G$-relative homology} $\GTor{*}{M}{N}$ which has been defined and studied by Avramov and Martsinkovsky~\cite{LLAAMr02}, and Iacob~\cite{AIc07}; see \ref{RH} for the definition of relative homology. We seek to find out how the vanishing of $\G$-relative homology relates to depth of tensor products of modules. One of the consequences of our main argument, \thmref{gdepth}, is an extension of \thmref{recover}. More precisely we obtain the following result in \corref{general}.

\begin{thm} \label{thm:gdepth1} Let $M$ and $N$ be $R$-modules such that $M$ has finite $\G$-dimension. Then the depth formula holds provided the following conditions hold.
\begin{enumerate}[\rm(i)]
\item $\Ttor{i}{M}{N}=0$ for all $i \le 0$.
\item $\GTor{i}{M}{N}=0$ for all $i \ge 1$.
\end{enumerate}
\end{thm}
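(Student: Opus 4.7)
My plan is to deduce this directly from \thmref{gdepth}, the main technical theorem of the paper. Since Theorem~\thmref{gdepth1} appears as the corollary \corref{general}, the argument should reduce to verifying that conditions (i) and (ii) above fit the hypotheses of \thmref{gdepth}.

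I anticipate that \thmref{gdepth} supplies the depth formula via a two-step mechanism. Hypothesis (i), the vanishing of Tate Tor in nonpositive degrees, will likely give a depth equality at the level of the $\G$-relative derived tensor product,
\[
\dpt{M} + \dpt{N} \;=\; \dpt{\GLtp{M}{N}} + \dptR,
\]
while hypothesis (ii), the vanishing of $\G$-relative Tor in positive degrees, will be used to collapse $\GLtp{M}{N}$ to the ordinary tensor $\tp{M}{N}$ in the derived category. Explicitly, for a proper $\G$-projective resolution $G \qra M$ one has $H_i(\tp{G}{N}) = \GTor{i}{M}{N}$, which vanishes for $i \ge 1$ by (ii), and equals $\tp{M}{N}$ in degree zero. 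Hence the natural morphism $\tp{G}{N} \qra \tp{M}{N}$ is a quasi-isomorphism, so $\dpt{\GLtp{M}{N}} = \dpt{\tpp{M}{N}}$, and substitution delivers the ordinary depth formula.

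The main obstacle I foresee is confirming that the Tate-Tor hypothesis appearing inside \thmref{gdepth} really matches condition (i); if it does, the deduction is essentially a brief assembly step. Should \thmref{gdepth} instead be stated in a shifted or mixed form, one can first translate the hypotheses into the required shape via the Avramov--Martsinkovsky--Iacob long exact sequence interlacing $\Tor{\ast}{M}{N}$, $\GTor{\ast}{M}{N}$, and $\Ttor{\ast}{M}{N}$ for $M$ of finite $\G$-dimension. Either way the heart of the work has been carried out in \thmref{gdepth}, and the corollary should follow essentially by inspection.
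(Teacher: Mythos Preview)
Your proposal is correct and matches the paper's proof essentially line for line: the paper invokes \ref{RH}(i) to conclude from hypothesis (ii) that $\GLtp{M}{N}\qis\tp{M}{N}$, and then applies \thmref{gdepth} (whose hypothesis is exactly condition (i)) to obtain the depth formula. The contingency about a ``shifted or mixed form'' and the Avramov--Martsinkovsky--Iacob sequence is unnecessary, since \thmref{gdepth} is stated precisely in the form you anticipated.
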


Let us remark that the hypotheses of \thmref{gdepth1} are in general weaker than those of  \thmref{recover}: Jorgensen and \c{S}ega \cite[4.1]{DAJLMS05} constructed modules $M$ and $N$ with $\Ttor{i}{M}{N}=0$ for all $i\le 0$, $\GTor{i}{M}{N}=0$ for all $i\ge 1$ and $\Ttor{i}{M}{N}\ne 0$ for all $i\ge 2$; see \exaref{2}. Moreover relative homology vanishes more frequently than absolute homolgy. For example, if $M$ is totally reflexive, i.e., $M$ has $\G$-dimension zero, then $\GTor{i}{M}{N}=0$ for all $i\ge 1$; see \ref{RH}. In particular this establishes, by \thmref{gdepth1}, if $M$ is totally reflexive and $\Ttor{i}{M}{N}=0$ for all $i \le 0$, then the depth formula holds, i.e., in this setting, $\dpt{N} = \dpt{\tpp{M}{N}}$.

We record preliminary results in section 2 and give a proof of \thmref{gdepth1} in section 3; see Corollary \ref{gdepth1cor}. Section 3 is also devoted to other applications of our argument. For example we analyze the depth of $\G$-relative homology modules and obtain a necessary condition for the depth formula to hold; see Theorem \ref{thm4}. As another application, we obtain a class of rings over which absolute homology and $\G$-relative homology behave differently. This leads us to the content of the next result; see also \corref{211}.

\begin{prp} \label{prp:omoshiroi} Assume $R$ is a two-dimensional Gorenstein normal local domain. If $I$ is an ideal of $R$, then $\GTor{i}{I}{I}=0$ for all $i\ge 1$. In particular, if $R$ is not regular, then $\GTor{i}{\m}{\m}=0\neq \Tor{i}{\m}{\m}$ for all $i\ge 1$.
\end{prp}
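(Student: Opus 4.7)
The central observation is that $R$, being a two-dimensional normal domain, forces every reflexive $R$-module to satisfy Serre's $S_2$ condition, and since $\dim R = 2$ this is the same as having depth two, i.e., being maximal Cohen-Macaulay. Combined with the Gorenstein hypothesis, which makes MCM coincide with totally reflexive, I conclude that every reflexive $R$-module has $\G$-dimension zero. Now for any nonzero ideal $I$, the biduality map $I \hookrightarrow I^{**}$ is injective (as $I$ is torsion-free over a domain), its target $I^{**}$ is reflexive and hence totally reflexive by the previous sentence, and the cokernel $C := I^{**}/I$ is supported only in codimension $\ge 2$ (equivalently, only at $\m$), so has finite length.

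Starting from the short exact sequence $0 \to I \to I^{**} \to C \to 0$, my plan is to apply the long exact sequence of $\GTor_*(-, I)$. Since $I^{**}$ is totally reflexive, \ref{RH} yields $\GTor{j}{I^{**}}{I} = 0$ for every $j \ge 1$, so the long exact sequence collapses to isomorphisms $\GTor{i}{I}{I} \cong \GTor{i+1}{C}{I}$ for $i \ge 1$. The final step uses symmetry of $\G$-relative Tor over Gorenstein rings to rewrite $\GTor{i+1}{C}{I} \cong \GTor{i+1}{I}{C}$. Since $I$ is torsion-free in our Cohen-Macaulay ring of dimension two, $\dpt[] I \ge 1$, hence $\Gdim I \le 1$ by Auslander--Bridger; consequently $\GTor{j}{I}{-} = 0$ for every $j \ge 2$, and the right-hand side vanishes for $i \ge 1$. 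The main obstacle I anticipate is citing the precise balanced-functor/symmetry statement for $\GTor$ in the conventions this paper adopts, together with setting up the long exact sequence in the first variable carefully for modules of finite $\G$-dimension (via a Horseshoe construction for proper $\G$-resolutions).

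For the ``In particular'' clause, specialize to $I = \m$. Because $\dpt R = 2$, the long exact sequence of $\Hom(-, R)$ for $0 \to \m \to R \to k \to 0$, together with $\Ext{1}{k}{R} = 0$, gives $\Hom{\m}{R} = R$ and hence $\m^{**} = R$; the main assertion then produces $\GTor{i}{\m}{\m} = 0$ for $i \ge 1$. For non-vanishing of absolute $\Tor$, observe that $\m$ is the first syzygy of $k$ over $R$, so dimension-shifting twice gives $\Tor{i}{\m}{\m} \cong \Tor{i+2}{k}{k}$ for $i \ge 1$; since $R$ is not regular, $k$ has infinite projective dimension, and the Auslander--Buchsbaum--Serre characterization of regularity gives $\Tor{j}{k}{k} \ne 0$ for every $j \ge 0$.
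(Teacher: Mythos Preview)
Your approach differs from the paper's, and there is a genuine gap at the step where you apply the long exact sequence of $\G$-relative Tor in the first variable to $0 \to I \to I^{**} \to C \to 0$. The long exact sequence in \ref{lem:GTorseq} requires the short exact sequence to be $\G$-proper, and yours typically is not. Take $I=\m$ with $R$ not regular: then $I^{**}=R$, $C=k$, and the sequence is $0\to\m\to R\to k\to 0$. For this to be $\G$-proper one needs $\Hom{G}{R}\to\Hom{G}{k}$ surjective for every totally reflexive $G$, i.e.\ every map $G\to k$ must lift along $R\twoheadrightarrow k$. Choosing a map that sends one minimal generator of $G$ to $1$ and the others to $0$, any lift $G\to R$ hits a unit, so $G$ acquires a free direct summand; iterating, $G$ is free. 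Since a non-regular Gorenstein local ring admits non-free MCM (hence totally reflexive) modules, the sequence is \emph{not} $\G$-proper, and no Horseshoe construction for proper $\G$-resolutions is available here. The symmetry of $\G$-relative Tor that you invoke is a second unverified step in this paper's framework, and resolving in the second variable does not bypass the first issue either, since totally reflexive modules are not flat.

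The paper proceeds instead through the depth formula. If $\tp{I}{I}$ is torsion-free then $\tp{I}{I}\cong I^2$, and a generator count forces $I$ to be principal, hence free. Otherwise, using that $R$ is a two-dimensional normal domain, one checks $\dpt{\tpp{I}{I}}=0$, so the depth formula holds for the pair $(I,I)$; since $R_\p$ is regular for every $\p\ne\m$, each $\GTor{i}{I}{I}$ has finite length, and Theorem~\ref{thm4} then yields $\GTor{i}{I}{I}=0$ for all $i\ge 1$. Your argument for $\Tor{i}{\m}{\m}\ne 0$ via dimension-shifting to $\Tor{i+2}{k}{k}$ is correct.
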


\section{A $\G$-relative derived depth formula}

We start by recalling several definitions and terminology from \cite{MAsMBr69,LLAAMr02,AIc07}. Throughout we use homological notation for complexes of $R$-modules.

\begin{ipg} We say that a complex is \emph{acyclic} if it has zero
homology. A morphism of complexes that induces an isomorphism in homology is marked by the symbol `$\qis$'.

The soft truncation below at $n$ of a complex $T$, denoted by $\Tsb{n}{T}$, is the complex defined as
\begin{equation*}
  (\Tsb{n}{T})_i =
  \begin{cases}
    T_i &\text{for $i > n$}\,,\\
    \Ker{(T_n \to T_{n-1})} &\text{for $i = n$}\,,\\
    0 &\text{for $i < n$}\,.
  \end{cases}
\end{equation*}

The depth of a complex $T$ of $R$-modules is defined as:
\begin{equation*}
  \dpt{T} = \inf\setof{i\in\ZZ}{\H[-i]{\RHom{k}{T}} \ne 0}.
\end{equation*}
Here $k$ is the residue field of $R$, and the derived Hom complex,
$\RHom{k}{T}$, can be computed by using a
DG-injective resolution $T \qra I$ in the sense of \cite{LLAHBF91}.
We note that $\dpt{0}=\infty$.
\end{ipg}

\begin{bfhpg}[Tate homology] \label{TH}
  An acyclic complex $T$ of free $R$-modules is called \emph{totally
    acyclic} if the dual complex $\Hom{T}{R}$ is acyclic. An
  $R$-module $G$ is called \emph{totally reflexive} if there exists
  such a totally acyclic complex $T$ with $G\is \Coker{(T_1\to T_0)}$.

  The \emph{$\G$-dimension} of an $R$-module $M$, written $\Gdim{M}$,
  is the minimal length of a resolution of $M$ by totally reflexive
  modules (for the zero-module it is $-\infty$). If $\Gdim{M}<\infty$, then one has the \emph{Auslander--Bridger formula}; see \cite[4.13(b)]{MAsMBr69}.
  \begin{equation}\notag{}
    \Gdim{M}+\dpt{M} = \dptR
  \end{equation}

  A \emph{complete resolution} of an $R$-module $M$ is a diagram
  \begin{equation*}
    T \xra{\tau} P \qra M,
  \end{equation*}
  where $P \qra M$ is a projective resolution, $T$ is a totally
  acyclic complex of free $R$-modules, and $\tau_i$ is an isomorphism
  for $i \gg 0$. An $R$-module has finite $\G$-dimension if and only
  if it has a complete resolution; moreover, if a module has a
  complete resolution, then it has one with $\tau$ surjective; see
  \cite[3.1]{LLAAMr02}.

  Let $M$ be an $R$-module with a complete resolution $T \to P \to
  M$. For an $R$-module $N$, \emph{Tate homology} of $M$ and $N$ is
  defined as
  \begin{equation*}
    \Ttor{i}{M}{N} = \H[i]{\tp{T}{N}} \ \text{for } i\in\ZZ\:.
  \end{equation*}
\end{bfhpg}

\begin{bfhpg}[Relative homology] \label{RH}
A sequence $\eta$ of $R$-modules is called \emph{$\G$-proper} if the
  induced sequence $\Hom{G}{\eta}$ is exact for every totally
  reflexive $R$-module $G$.

  A \emph{$\G$-proper resolution} of an $R$-module $M$ is a resolution
  $L \qra M$ by totally reflexive $R$-modules such that the augmented
  resolution $L^+$ is a $\G$-proper sequence. Every module of finite
  $\G$-dimension has a $\G$-proper resolution $L\qra M$, even one with
  $L_i$ projective for all $i>0$ and $L_i=0$ for $i>\Gdim(M)$; see
  \cite[3.1]{LLAAMr02}.

  Let $M$ be an $R$-module with a $\G$-proper resolution $L \qra
  M$. Any two $\G$-proper resolutions of $M$ are homotopy equivalent; see
  \cite[4.3]{LLAAMr02}. For an $R$-module $N$, we set
  \begin{equation*}
    \GLtp{M}{N} \deq \tp{L}{N}.
  \end{equation*}
Note that $\GLtp{M}{N}$ is unique up to isomorphism in the
  derived category over $R$.  The
  \emph{$\G$-relative} homology of $M$ and $N$ is defined as
  \begin{equation*}
    \GTor{i}{M}{N} = \H[i]{\GLtp{M}{N}} \ \text{for } i\in\ZZ.
  \end{equation*}

\vspace{0.7cm}

The following facts will be used several times:
\begin{enumerate}[\rm(i)]
\item Assume $\Gdim(M)<\infty$. Then $\GTor{i}{M}{N}=0$ for all $i>\Gdim(M)$. Furthermore, if
$\GTor{i}{M}{N}=0$ for all $i\ge 1$, then $\GLtp{M}{N} \dqis \tp{M}{N}$.
\item If $\pd(M)<\infty$, then $\GTor{i}{M}{N} \is \Tor{i}{M}{N}$ for all $i\geq 0$.
\end{enumerate}
\end{bfhpg}

\begin{thm}
  \label{thm:gdepth} Let $M$ be an $R$-module of finite $\G$-dimension. For every
  $R$-module $N$ with $\Ttor{i}{M}{N}=0$ for all $i \le 0$, there is an equality
\begin{equation*}
\dpt{M}+\dpt{N}= \dptR+ \dpt{\GLtpp{M}{N}}.
  \end{equation*}
\end{thm}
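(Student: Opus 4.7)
The plan is to adapt Christensen and Jorgensen's approach to the derived depth formula, but to replace the projective resolution of $M$ by a $\G$-proper resolution. Since the $\G$-proper resolution is already "close" to $M$ on the totally reflexive side, vanishing of Tate homology in non-positive degrees alone should be enough to make the argument run.

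First I would fix, using \ref{RH}, a $\G$-proper resolution $L \xra{\qis} M$ of length $g = \Gdim{M}$ with $L_0$ totally reflexive and $L_i$ projective for $1 \le i \le g$, so that $\GLtpp{M}{N}$ is represented by the bounded complex $L \otimes_R N$. In parallel I would fix a complete resolution $T \xra{\tau} P \xra{\qis} M$ chosen compatibly with $L$: namely, take a complete resolution $T^{(0)}$ of the totally reflexive module $L_0$ and splice its non-negative part with $L_{\ge 1}$ to produce a projective resolution $P$ of $M$; then arrange $T$ so that $\tau_i$ is an isomorphism for $i \geq g+1$, $T_i = L_i$ for $1 \le i \le g$, and $T_0 \twoheadrightarrow L_0$ is the augmentation from $T^{(0)}$. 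With this setup there is a natural surjective chain map $\pi \colon T \twoheadrightarrow L$ (identity in positive degrees, augmentation $T_0 \to L_0$ in degree zero, zero in negative degrees), whose kernel $K = \Ker\pi$ is a complex of free modules concentrated in degrees $\le 0$ and quasi-isomorphic to a shift of $L_0$.

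Tensoring the short exact sequence $0 \to K \to T \to L \to 0$ with $N$ gives
\begin{equation*}
0 \to K \otimes_R N \to T \otimes_R N \to L \otimes_R N \to 0,
\end{equation*}
and the associated long exact sequence relates $\H[i]{K \otimes N}$, $\Ttor{i}{M}{N}$, and $\GTor{i}{M}{N}$. The hypothesis $\Ttor{i}{M}{N}=0$ for $i\le 0$ cuts off the homology of $T \otimes N$ below degree one, which, via the long exact sequence, forces the homology of $K \otimes N$ to be controlled, essentially by Tor of $L_0$ (or of $\Omega M^L$) against $N$ in a finite range. Since $T$ and $K$ consist of free modules and $L_0$ is totally reflexive (so $\dpt{L_0} = \dptR$), a depth calculation for complexes of free modules together with the depth lemma applied to the long exact sequence lets one read off $\dpt{\GLtpp{M}{N}}$ in terms of $\dpt{N}$ and the shift $g$. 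After the Auslander--Bridger formula $\dpt{M} = \dptR - g$ is invoked, one obtains the equality
\begin{equation*}
\dpt{M}+\dpt{N} \dis \dptR + \dpt{\GLtpp{M}{N}}.
\end{equation*}

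The main obstacle, as I see it, is to extract the depth of $K \otimes_R N$ cleanly despite $K$ being unbounded below. The Tate-homology hypothesis has to be used precisely at this point: it must translate into enough acyclicity of $K \otimes_R N$ in a neighborhood of degree zero to allow one to replace $K \otimes N$ (up to quasi-isomorphism) by a complex for which the depth is computable from $\dpt N$ and a shift determined by $\Gdim M$. Once this bookkeeping is done, the depth lemma and the standard rules for depth of tensor products with free modules do the rest.
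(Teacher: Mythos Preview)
Your setup is close in spirit to the paper's, but there is a real gap in the endgame. First, a minor point: the short exact sequence $0 \to K \to T \to L \to 0$ you build is \emph{not} degree-wise split in degree~$0$, because $L_0$ is only totally reflexive, not projective; hence $0 \to K\otimes_R N \to T\otimes_R N \to L\otimes_R N \to 0$ need not be exact. The paper avoids this by taking $K=\Ker(T\xra{\tau}P)$ with $P$ the \emph{projective} resolution (so the sequence is degree-wise split automatically) and then invoking \cite[3.8]{LLAAMr02} to produce a second degree-wise split sequence $0\to \Shift[-1]{L}\to \Tsb{-1}{T}\to P\to 0$. Comparing the two tensored sequences and using $\Ttor{i}{M}{N}=0$ for $i\le 0$ yields a genuine quasi-isomorphism $\tp{L}{N}\qra \tp{\Shift{K}}{N}$, with $K$ a bounded-above complex of projectives.

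The more serious gap is your final depth computation. Saying ``depth lemma plus depth of complexes of free modules plus Auslander--Bridger'' is not a proof here: the depth of $\GLtpp{M}{N}$ a priori involves the depths of \emph{all} the modules $\GTor{i}{M}{N}$, and there is no reason these should collapse to a single shift determined by $g=\Gdim{M}$. The paper's key maneuver is a separation-of-variables trick: from the quasi-isomorphism above, take an injective resolution $N\qra I$ so that $\tp{\Shift{K}}{I}$ is a DG-injective resolution of $\GLtpp{M}{N}$; then
\[
\Hom{k}{\tp{\Shift{K}}{I}} \is \tp[k]{\Hom{k}{I}}{\tpp{k}{\Shift{K}}},
\]
and K\"unneth gives $\dpt{\GLtpp{M}{N}}=\dpt{N}+c$, where $c$ depends only on $K$ (hence only on $M$), not on $N$. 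Specializing to $N=R$ identifies $c=\dpt{M}-\dptR$. Your proposal contains neither this K\"unneth step nor the $N=R$ specialization, and without them the argument does not close; in particular, Auslander--Bridger alone cannot produce the formula, since the constant $c$ is not visibly $-g$ until after the $N=R$ substitution.
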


\begin{prf*}
  We can assume that $M$ is non-zero, otherwise both sides of the
  equality are $\infty$. Choose a complete resolution $T \xra{\tau} P
  \qra M$ with $\tau$ surjective, and set $K=\Ker{\tau}$. The exact
  sequence $0 \to K \to T \to P \to 0$ is degree-wise split, and so
  the next sequence is exact as well,
  \begin{equation*}
    0\lra \tp{K}{N} \lra \tp{T}{N} \lra \tp{P}{N} \lra 0.
  \end{equation*}
  There is a degree-wise split exact
  sequence $0 \to \Shift[-1]{L} \to \Tsb{-1}{T} \to P \to 0$ such that
  $L \qra M$ is a $\G$-proper resolution; see \cite[3.8]{LLAAMr02}. Consider the  exact sequence
  \begin{equation*}
    0 \lra \tp{\Shift[-1]{L}}{N} \lra \tp{\Tsb{-1}{T}}{N} \lra
    \tp{P}{N} \lra 0.
  \end{equation*}
  The assumption that $\Ttor{i}{M}{N}$ vanishes for all $i \le 0$
  ensures that the embedding $\tp{\Tsb{-1}{T}}{N} \to \tp{T}{N}$ is a
  quasi-isomorphism. Consider the following commutative diagram with
  exact rows:
  \begin{equation*}
    \xymatrix{
      0 \ar[r] & \tp{\Shift[-1]{L}}{N} \ar[r]
      & \tp{\Tsb{-1}{T}}{N} \ar[r] \ar[d]^-\qis
      & \tp{P}{N} \ar@{=}[d] \ar[r] & 0\\
      0 \ar[r] &  \tp{K}{N} \ar[r] & \tp{T}{N} \ar[r] & \tp{P}{N} \ar[r] & 0
    }
  \end{equation*}
  It yields a quasi-isomorphism $\tp{\Shift[-1]{L}}{N} \to
  \tp{K}{N}$. Let $N \qra I$ be an injective resolution; the induced
  morphism $\tp{\Shift[]{K}}{N} \to \tp{\Shift[]{K}}{I}$ is a
  quasi-isomorphim, as $K$ is a complex of projective modules; see
  \cite[2.14]{CFH-06}. As $K$ is a bounded above,
  $\tp{\Shift[]{K}}{I}$ is a bounded above complex of injective
  modules. Hence the composite
  \begin{equation*}
    \tp{L}{N} \qra \tp{\Shift[]{K}}{N} \qra \tp{\Shift[]{K}}{I}
  \end{equation*}
  is a DG-injective resolution, so one has
  \begin{equation*}
    \dpt{\GLtpp{M}{N}} \deq
    \inf\setof{i\in\ZZ}{\H[-i]{\Hom{k}{\tp{\Shift{K}}{I}}}\ne 0}\:.
  \end{equation*}
  The first isomorphism below follows from \cite[1.1]{LWCDAJ15},
  and the second uses that the $R$-action on $\Hom{k}{I}$ factors
  through $k$,
  \begin{equation*}
    \Hom{k}{\tp{\Shift{K}}{I}} \dis
    \tp{\Hom{k}{I}}{\Shift{K}} \dis
    \tp[k]{\Hom{k}{I}}{\tpp{k}{\Shift{K}}}\:.
  \end{equation*}
  Now the K\"unneth formula and the definition of depth yield
  \begin{equation*}
    \dpt{\GLtpp{M}{N}}
    \deq \dpt{N}
    + \inf\setof{i}{\H[-i]{\tp{k}{\Shift{K}}}\ne 0}\:.
  \end{equation*}
  For $N=R$, the equality reads $\dpt{M} = \dptR +
  \inf\setof{i}{\H[-i]{\tp{k}{\Shift{K}}}\ne 0}$, and the desired
  equality follows.
\end{prf*}

\section{Applications of \thmref{gdepth}}
We give three main applications of \thmref{gdepth}. The first one is \thmref{gdepth1}, which we advertised in the introduction; see Corollary \ref{gdepth1cor}. Our second application is a necessary condition for the depth formula in terms of the vanishing of $\G$-relative homology; see Theorem \ref{thm4}. This application, in particular, gives a class of rings over which $\GTor{i}{I}{I}$ vanishes for all ideals $I$ and for all positive integers $i$; see Corollary \ref{cor:211}. As a final application of \thmref{gdepth}, in Corollary \ref{cor:gadepth2}, we prove a  $\G$-relative version of Jorgensen's dependency formula \cite[2.2]{DAJ99}.

\subsection*{A proof of \thmref{gdepth1}}

The next corollary is advertised as \thmref{gdepth1} in the introduction.

\begin{cor} \label{gdepth1cor} Let $M$ and $N$ be $R$-modules. Assume $M$ has  finite $\G$-dimension. Then the depth formula holds provided the following holds:
\begin{enumerate}[\rm(i)]
\item $\Ttor{i}{M}{N}=0$ for all $i \le 0$.
\item $\GTor{i}{M}{N}=0$ for all $i \ge 1$.
\end{enumerate}
\end{cor}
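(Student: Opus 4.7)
The plan is to derive the corollary as an immediate consequence of \thmref{gdepth} combined with the basic facts about $\G$-relative homology collected in \ref{RH}.

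First, I would apply \thmref{gdepth}. Its hypotheses are precisely that $M$ has finite $\G$-dimension and that $\Ttor{i}{M}{N}=0$ for every $i\le 0$; both hold, the latter being hypothesis (i). This supplies the equality
\begin{equation*}
\dpt{M}+\dpt{N} \;=\; \dptR + \dpt{\GLtpp{M}{N}}.
\end{equation*}

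Next, I would use hypothesis (ii) to replace $\GLtp{M}{N}$ with the ordinary tensor product. Since $\Gdim{M}<\infty$ and $\GTor{i}{M}{N}=0$ for every $i\ge 1$, the second assertion of fact (i) in \ref{RH} provides a quasi-isomorphism $\GLtp{M}{N}\qra \tp{M}{N}$. Because the depth of a complex is defined through $\RHom{k}{-}$ computed by a DG-injective resolution, it is an invariant of the quasi-isomorphism class; consequently $\dpt{\GLtpp{M}{N}} = \dpt{\tpp{M}{N}}$.

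Substituting this identity into the displayed equality yields the depth formula $\dpt{M}+\dpt{N} = \dptR + \dpt{\tpp{M}{N}}$. No real obstacle arises: the corollary is essentially a one-step combination of \thmref{gdepth} with the standard facts listed in \ref{RH}, and the only point worth highlighting is the routine invariance of depth under quasi-isomorphism, which is built into its definition.
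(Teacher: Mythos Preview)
Your proof is correct and follows exactly the same approach as the paper: invoke \thmref{gdepth} under hypothesis (i), then use \ref{RH}(i) together with hypothesis (ii) to identify $\GLtp{M}{N}$ with $\tp{M}{N}$ up to quasi-isomorphism. The only difference is that you make explicit the (routine) invariance of depth under quasi-isomorphism, which the paper leaves implicit.
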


\begin{proof} Since $\GTor{i}{M}{N}=0$ for all $i \ge 1$, one has $\GLtp{M}{N} \dqis \tp{M}{N}$; see \ref{RH}(i) Hence it follows from \thmref{gdepth} that the depth formula holds.
\end{proof}

Recall that, if $M$ is totally reflexive, then $\GTor{i}{M}{N}=0$ for all $i \ge 1$. So one has the following special case of Corollary \ref{gdepth1cor}:

\begin{cor} \label{cor:general1} Let $M$ be a totally reflexive $R$-module. If $N$ is an $R$-module such that $\Ttor{i}{M}{N}=0$ for all $i\le 0$, then $\dpt{N} =\dpt{\tpp{M}{N}}$.
\end{cor}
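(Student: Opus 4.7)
The plan is to deduce this corollary as a direct specialization of Corollary \ref{gdepth1cor} (i.e., \thmref{gdepth1}), by verifying that the stronger totally reflexive hypothesis on $M$ makes one of the two vanishing conditions automatic and collapses the depth formula to the asserted identity.

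First I would note that if $M$ is totally reflexive, then $\Gdim{M} = 0$, so in particular $M$ has finite $\G$-dimension and the hypotheses of Corollary \ref{gdepth1cor} are available. The hypothesis $\Ttor{i}{M}{N}=0$ for all $i \le 0$ is assumed outright, so condition~(i) of Corollary \ref{gdepth1cor} holds. For condition~(ii), I would invoke fact \ref{RH}(i): since $\Gdim{M} = 0$, one has $\GTor{i}{M}{N} = 0$ for all $i > \Gdim{M} = 0$, i.e., for all $i \ge 1$. This is precisely the point the introduction already emphasizes about totally reflexive modules.

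With both hypotheses of Corollary \ref{gdepth1cor} verified, I would apply it to conclude that the depth formula holds:
\begin{equation*}
\dpt{M} + \dpt{N} \deq \dpt{\tpp{M}{N}} + \dptR.
\end{equation*}
The final step is to simplify using the Auslander--Bridger formula (recalled in \ref{TH}): since $\Gdim{M} = 0$, one has $\dpt{M} = \dptR$. Subtracting $\dptR$ from both sides of the depth formula yields the desired equality $\dpt{N} = \dpt{\tpp{M}{N}}$.

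There is essentially no obstacle here, since all the work has been absorbed into \thmref{gdepth} and its corollary; the only point to double-check is that the totally reflexive case is covered by \ref{RH}(i) even when $\Gdim{M} = -\infty$, i.e., when $M = 0$, but in that case both sides are $\infty$ and the conclusion is trivial.
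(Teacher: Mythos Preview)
Your proposal is correct and follows exactly the route the paper takes: the paper simply remarks that totally reflexive modules have $\GTor{i}{M}{N}=0$ for all $i\ge 1$ and then records \corref{general1} as a special case of Corollary~\ref{gdepth1cor}. Your additional sentence invoking the Auslander--Bridger formula to pass from the depth formula to $\dpt{N}=\dpt{\tpp{M}{N}}$ makes explicit what the paper leaves to the reader.
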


We give an example to show that the assumption of vanishing of all negative Tate
homology in \corref{general1} cannot be removed in general.

\begin{exa} \label{exa:1} Let $R = \CC[\![x,y]\!]/(xy)$, $M = R/(x)$ and $N=R/(y)$. Then $M$
is totally reflexive and $\dpt{N}=1 \neq  0= \dpt{\tpp{M}{N}}$. Note one has $\Ttor{i}{M}{N} \cong k$ for all even and negative integers $i$.
\end{exa}

The next example shows the vanishing of negative and positive Tate homology are two distinct conditions in general. In particular it points out that there are settings for which \thmref{gdepth1} applies while the result of Christensen and Jorgensen, namely \thmref{recover}, does not.

\begin{exa} \label{exa:2} It follows from a result of Jorgensen and \c{S}ega \cite[4.1]{DAJLMS05} that there exists an artinian Gorenstein local ring $R$ and finitely generated $R$-modules $M$ and $N$ such that
  \begin{equation*}
    \Ttor{i}{M}{N}\,
    \begin{cases}
      = 0 & \text{ for $i > 0$ }\\
      \ne 0 & \text{ for $i < 0$}.
    \end{cases}
  \end{equation*}
  Let $T \to P \to M$ be a complete resolution of $M$. Applying $(-)^* =
  \Hom{-}{R}$, one obtains:
  \begin{equation}\tag{\ref{exa:2}.1}
    \tpp{T}{N}^* \is \Hom{T}{N^*} \is
    \Hom{\Hom{T^*}{R}}{N^*} \is \tp{T^*}{N^*}.
  \end{equation}
The first and the third isomorphisms of (\ref{exa:2}.1) follow from the adjointness and the Hom evaluation, respectively; see \cite[II.5.2 and VI.5.2]{CE}. On the other hand the second isomorphism in (\ref{exa:2}.1) holds since $T$ is a complex of finitely generated projective $R$-modules. Now set $Y=N^*$ and let $X$ be the
  Auslander transpose of $M$, i.e., $X =\Coker{(T_0^* \to
    T_1^*)}$. In view of (\ref{exa:2}.1), for all $i\in \ZZ$, one has the following isomorphisms:
\begin{equation}\tag{\ref{exa:2}.2}
 \H[i]{\tp{T^*}{N^*}} \cong  \H[i]{\tpp{T}{N}^*} \cong (\H[-i]{T\otimes_{R}N})^* \cong \Ttor{-i}{M}{N}^* .
  \end{equation}
Consequently one deduces from (\ref{exa:2}.2) that:
  \begin{equation*}
    \Ttor{i}{X}{Y} = \H[i-1]{\tp{T^*}{N^*}} \cong \Ttor{-i+1}{M}{N}^*= \,
    \begin{cases}
      \ne 0 & \text{ for $i > 1$ }\\
      = 0 & \text{ for $i < 1$}.
    \end{cases}
  \end{equation*}
\end{exa}

One can obtain a slightly modified version of Corollary \ref{gdepth1cor} in case $\Ttor{i}{M}{N}$ vanishes for all $i\le \Gdim(M)$. To establish this we need a result of Iacob \cite{AIc07}, who showed absolute, relative, and Tate homology fit together in an exact sequence.

\begin{ipg}(Iacob \thmcite[1]{AIc07}) \label{Iac}
Let $M$ be an $R$-module of finite $\G$-dimension. For every $R$-module $N$
there is an exact sequence:
\begin{equation*}
  \cdots \lra \GTor{2}{M}{N} \lra \Ttor{1}{M}{N} \lra
  \Tor{1}{M}{N} \lra \GTor{1}{M}{N} \lra 0.
\end{equation*}
\end{ipg}

\begin{cor} \label{cor:general} Let $M$ and $N$ be $R$-modules. Assume $M$ has finite $\G$-dimension $g$. Assume further the following conditions hold:
\begin{enumerate}[\rm(i)]
\item $\Ttor{i}{M}{N}=0$  for all  $i\le g$.
\item If $g\ge 1$, assume $\Tor{i}{M}{N}=0$  for all $i=1, \ldots, g$.
\end{enumerate}
Then $\dpt{M} + \dpt{N} \deq  \dptR+ \dpt{\tpp{M}{N}}$.
\end{cor}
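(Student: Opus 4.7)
The plan is to reduce the corollary to the already-established Corollary \ref{gdepth1cor}. That result requires two vanishing inputs, $\Ttor{i}{M}{N}=0$ for all $i\le 0$ and $\GTor{i}{M}{N}=0$ for all $i\ge 1$, and my strategy is to derive both from the present hypotheses (i) and (ii).

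The first condition is immediate: we may assume $M$ is non-zero (both sides are $\infty$ otherwise), so $g\ge 0$, and hypothesis (i) gives the even stronger statement $\Ttor{i}{M}{N}=0$ for all $i\le g$.

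The second condition is the substantive step. I would split the range $i\ge 1$ into two pieces. For $i>g$ the vanishing $\GTor{i}{M}{N}=0$ is automatic by \ref{RH}(i); this also disposes of the case $g=0$, where $M$ is totally reflexive. The remaining case is $1\le i\le g$ with $g\ge 1$. Here I would invoke Iacob's long exact sequence from \ref{Iac}: reading off the indexing pattern visible in its displayed tail, the sequence contains, for each $i\ge 1$, an exact segment
\begin{equation*}
\Tor{i}{M}{N}\lra\GTor{i}{M}{N}\lra\Ttor{i-1}{M}{N}
\end{equation*}
(with the right-hand term replaced by $0$ when $i=1$, matching the end of the displayed sequence). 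Within the range $1\le i\le g$, hypothesis (ii) annihilates the incoming $\Tor{i}{M}{N}$ and hypothesis (i) annihilates the outgoing $\Ttor{i-1}{M}{N}$, since $i-1\le g-1\le g$; exactness then forces $\GTor{i}{M}{N}=0$, as desired.

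Once both hypotheses of Corollary \ref{gdepth1cor} are in place, the depth formula follows at once from that corollary. The only mildly delicate step is extracting the correct segment from Iacob's long exact sequence, as \ref{Iac} only displays its tail explicitly; but the cyclic indexing pattern $\cdots\to\GTor{n+1}\to\Ttor{n}\to\Tor{n}\to\GTor{n}\to\cdots$ is unambiguous, so this is routine bookkeeping rather than a genuine obstacle, and no new ideas beyond the reduction to Corollary \ref{gdepth1cor} should be required.
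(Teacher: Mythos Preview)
Your proposal is correct and follows essentially the same route as the paper's primary proof: both reduce to Corollary~\ref{gdepth1cor} by using Iacob's long exact sequence \ref{Iac} together with hypotheses (i) and (ii) to force $\GTor{i}{M}{N}=0$ for $1\le i\le g$, and then invoke \ref{RH}(i) for $i>g$. The only cosmetic difference is that the paper phrases the intermediate step as an isomorphism $\GTor{i}{M}{N}\cong\Tor{i}{M}{N}$ (using vanishing of $\widehat{\operatorname{Tor}}$ on both sides of $\GTor{i}$), whereas you extract the slightly shorter exact segment $\Tor{i}\to\GTor{i}\to\Ttor{i-1}$; both yield the same conclusion.
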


\begin{prf*} The case where $g=0$ is \corref{general1}. Hence assume $g\ge 1$. Since $\Ttor{i}{M}{N}=0$  for all  $i\le g$, it follows from \ref{Iac} that $\GTor{i}{M}{N} \is \Tor{i}{M}{N}$ for all $i=1, \ldots, g$. Therefore, by the hypothesis, one has
$\GTor{i}{M}{N}=0$ for all  $i=1, \ldots, g$. This shows $\GTor{i}{M}{N}=0$ for all $i\ge 1$, and  result follows from Corollary \ref{gdepth1cor}.
\end{prf*}

In passing we record a different proof of Corollary \ref{cor:general} that does not make use of our  results:

\textit{An alternative proof of Corollary \ref{cor:general}}:
We argue by induction on $g$. Assume $g=0$. Then $\Gdim(\Tr M)=0$, where $\Tr M$ is the (Auslander's) transpose of $M$. The following isomorphisms hold for all $i\le 0$; see \cite[4.4.7]{AvBu}.
 \[\begin{array}{rl}\tag{\ref{cor:general}.1}
\Ttor{i}{M}{N}&\cong\TExt{-i-1}{M^*}{N}\\
&\cong\TExt{-i+1}{\Tr M}{N}\\
&\cong\Ext{-i+1}{\Tr M}{N}.
\end{array}\]
By \cite[2.6]{MAsMBr69}, there is an exact sequence of the form:
\begin{equation}\tag{\ref{cor:general}.2}
\Ext{1}{\Tr M}{N}\hookrightarrow M\otimes_RN\lra\Hom{M^*}{N}\twoheadrightarrow \Ext{2}{\Tr M}{N}
\end{equation}
In view of the hypothesis, it follows from (\ref{cor:general}.1) and (\ref{cor:general}.2) that there exists an isomorphism $M\otimes_RN\cong\Hom{M^*}{N}$ and $\Ext{i}{M^*}{N}=0$ for all $i\ge 1$. Hence the assertion follows from \cite[4.1]{AY} and the Auslander -- Bridger formula.

Next assume $g\ge 1$. There is an exact sequence of the form:
\begin{equation}\tag{\ref{cor:general}.3}
0\lra M\lra P\lra X\lra0
\end{equation}
where $\Gdim(X)=0$ and $\pd(P)=\Gdim(M)=g<\infty$; see \cite[3.3]{LWCI}. Therefore $\dpt{P}=\dpt{M}$. Moreover, since $\Ttor{i}{P}{N}=0$ for all $i\in \ZZ$, it follows from (\ref{cor:general}.3) that
$\Ttor{i}{M}{N}\cong\Ttor{i+1}{X}{N}$ for all $i\in\ZZ$. This implies $\Ttor{i}{X}{N}=0$ for all $i\le g$. So, by the case where $g=0$, we conclude $\dpt{N} \deq  \dpt{\tpp{X}{N}}$.
Furthermore, as $\Ttor{i}{X}{N}\cong \Tor{i}{X}{N}$ for all $i\ge 1$, we have $\Tor{i}{X}{N}=0$ for all $i=1, \ldots, g$. Thus it follows from (\ref{cor:general}.3) that $\Tor{i}{P}{N}=0$ for all $i\ge 1$, and the next equality holds; see  \cite[1.2]{Au} (or \ref{ADF}).
\begin{equation}\tag{\ref{cor:general}.4}
\dpt{P} + \dpt{N} \deq  \dptR+ \dpt{\tpp{P}{N}}.
\end{equation}
It now suffices to see $\dpt(P\otimes_RN) = \dpt(M\otimes_RN)$. The sequence (\ref{cor:general}.3) induces the exact sequence:
\begin{equation}\tag{\ref{cor:general}.5}
0\lra M\otimes_RN\lra P\otimes_RN\lra X\otimes_RN\lra0.
\end{equation}
We have $\dpt(P\otimes_RN)=\dpt{N}-\pd(P)$ by (\ref{cor:general}.4). This shows that $\dpt(P\otimes_RN)<\dpt(N)=\dpt(X\otimes_RN)$. Using the depth lemma with (\ref{cor:general}.5), we see $\dpt(P\otimes_RN) = \dpt(M\otimes_RN)$.

\subsection*{A necessary condition for the depth formula}

Necessary conditions for the depth formula via the vanishing of absolute homology have been studied previously; see \cite[1.2]{CeD}. In Theorem \ref{thm4} we obtain such a necessary condition  that makes use of $\G$-relative homology. As a consequence, we prove in Corollary \ref{cor:211} that $\G$-relative homology of proper ideals vanishes diametrically opposed to absolute homology over two dimensional Gorenstein normal domains. First we record some preliminaries.

\begin{ipg}\label{lem:GTorseq} (Avramov and Martsinkovsky \cite{LLAAMr02})
 \label{lem:seq}
  Let $0 \to M' \to M \to M'' \to 0$ be a $\G$-proper sequence of
  $R$-modules of finite $\G$-dimension.  For every $R$-module $N$
  there is an exact sequence:
  \begin{align*}
    \label{eq:seq}
    \cdots \to \GTor{i+1}{M''}{N}
    \to \GTor{i}{M'}{N} \to \GTor{i}{M}{N} \to \GTor{i}{M''}{N} &\to \\
    \cdots \to \GTor{1}{M''}{N}\to \tp{M'}{N} \to \tp{M}{N} \to
    \tp{M''}{N} &\to 0.
  \end{align*}
\end{ipg}

\begin{rmk} \label{good} Assume $M$ is an $R$-module of finite $\G$-dimension. It follows from \cite[3.1]{LLAAMr02} and \cite[4.1]{LLAAMr02} that there is a $\G$-proper exact sequence
\begin{equation}\tag{\ref{good}.1}
0\rightarrow L\rightarrow X\rightarrow M\rightarrow0,
\end{equation}
where $\pd(L)<\infty$ and $\Gdim(X)=0$, i.e., $X$ is totally reflexive. Hence  \ref{lem:GTorseq} and (\ref{good}.1) yield the long exact sequence:
\begin{equation}  \tag{\ref{good}.2}
\cdots\rightarrow\GTor{i}{L}{N}\rightarrow\GTor{i}{X}{N}\rightarrow\GTor{i}{M}{N}\rightarrow\cdots.
\end{equation}
Note that $\GTor{i}{X}{N}=0$ and $\Tor{i}{L}{N}\cong\GTor{i}{L}{N}$ for all $i\ge 1$; see \ref{RH}(i,ii). Therefore, by (\ref{good}.2), the following isomorphims hold:
\begin{equation}\tag{\ref{good}.3}
\Tor{i}{L}{N}\cong\GTor{i}{L}{N}\cong\GTor{i+1}{M}{N}
\text{ for all } i\ge 1,
\end{equation}
and we have the following exact sequence:
\begin{equation}\tag{\ref{good}.4}
0\rightarrow\GTor{1}{M}{N}\rightarrow L\otimes_RN\rightarrow X\otimes_RN\rightarrow M\otimes_RN\rightarrow0.\qed
\end{equation}
\end{rmk}

\begin{ipg} (Auslander \cite[1.2]{Au}) \label{ADF}
Let $M$ and $N$ be $R$-modules such that $M$ has finite projective dimension. Set $q=\sup{\setof{i}{\Tor{i}{M}{N}\ne 0}}$. If $\dpt{\Tor{q}{M}{N}}\leq 1$ or $q=0$, then the following equality holds:
\begin{equation*}
    \dpt{M}+ \dpt{N} = \dptR + \dpt{\Tor{q}{M}{N}} -q.
  \end{equation*}
\end{ipg}

\vspace{0.1cm}

\begin{thm} \label{thm4}
 Let $M$ and $N$ be $R$-modules such that $M$ has finite $\G$-dimension. Assume that the following conditions hold:
\begin{enumerate}[\rm(i)]
\item $\dpt \GTor{i}{M}{N}\in \{0, \infty\}$ for all $i\ge 1$.
\item $\dpt{M} + \dpt{N} \deq    \dptR+ \dpt{\tpp{M}{N}}$.
\end{enumerate}
Then $\GTor{i}{M}{N}=0$ for all $i\ge 1$.
\end{thm}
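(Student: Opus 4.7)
The plan is to argue by contradiction, reducing the problem to a standard application of Auslander's formula~\ref{ADF} via the structural sequence supplied by Remark~\ref{good}. Suppose $\GTor{i}{M}{N}\neq 0$ for some $i\geq 1$, and let $s\geq 1$ be the largest such index; this $s$ is finite because $\Gdim M<\infty$. Both $M$ and $N$ may be taken nonzero, and hypothesis~(i) forces $\dpt{\GTor{s}{M}{N}}=0$. From Remark~\ref{good} I take the $\G$-proper exact sequence $0\to L\to X\to M\to 0$ with $X$ totally reflexive and $\pd L<\infty$, together with the isomorphisms $\Tor{i}{L}{N}\cong\GTor{i+1}{M}{N}$ for $i\geq 1$ and the four-term sequence~(\ref{good}.4).

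The first key step is to establish $\pd L=\dpt N+s-1$. When $s\geq 2$, the integer $q:=\sup\{i\geq 0:\Tor{i}{L}{N}\neq 0\}$ equals $s-1\geq 1$ and $\Tor{q}{L}{N}\cong\GTor{s}{M}{N}$ has depth zero, so \ref{ADF} applies and the Auslander--Buchsbaum formula yields the claim. When $s=1$, $L$ and $N$ are Tor-independent, and \ref{ADF} reduces to $\dpt L+\dpt N=\dptR+\dpt{L\otimes_R N}$; to proceed I would split~(\ref{good}.4) as $0\to\GTor{1}{M}{N}\to L\otimes_R N\to C\to 0$ and apply the depth lemma, using that $\GTor{1}{M}{N}$ is nonzero of depth zero while modules have nonnegative depth, to force $\dpt{L\otimes_R N}=0$, hence $\pd L=\dpt N=\dpt N+s-1$.

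The second key step is to apply the depth lemma to $0\to L\to X\to M\to 0$, now with $\dpt X=\dptR$ (Auslander--Bridger for the totally reflexive $X$), $\dpt L=\dptR-\dpt N-s+1$, and $\dpt M=\dptR-g$ where $g=\Gdim M$. The case $g=0$ would force $M$ totally reflexive, contradicting $s\geq 1$, so $g\geq 1$; then the two inequalities $\dpt L\geq\min\{\dpt X,\dpt M+1\}$ and $\dpt M\geq\min\{\dpt X,\dpt L-1\}$ together pin down $g=\dpt N+s$. Hypothesis~(ii) now yields
\[
\dpt{\tpp{M}{N}}=\dpt M+\dpt N-\dptR=\dpt N-g=-s<0,
\]
which contradicts $\dpt{\tpp{M}{N}}\geq 0$ (valid because $M\otimes_R N\neq 0$, by Nakayama applied to the nonzero modules $M$ and $N$). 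The contradiction means no such $s$ can exist, and the theorem follows.

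The main obstacle is the $s=1$ subcase of the first step: Auslander's formula with $q=0$ does not alone determine $\pd L$, so one needs the auxiliary depth-lemma argument on the split of~(\ref{good}.4) to conclude $\dpt{L\otimes_R N}=0$. Once that depth is in hand, the depth-lemma pincer in Step~2 and the contradiction in Step~3 run uniformly in $s$.
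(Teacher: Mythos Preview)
Your proof is correct and uses the same ingredients as the paper's: the $\G$-proper sequence $0\to L\to X\to M\to 0$ from Remark~\ref{good}, Auslander's formula~\ref{ADF} applied to the pair $(L,N)$, the depth lemma on that sequence to relate $\pd L$ and $g$, and the four-term sequence~(\ref{good}.4) to handle degree one. The only difference is the order of operations: the paper first extracts $\pd L=g-1$ from the depth lemma and then runs \ref{ADF} in two stages (vanishing for $i\ge 2$, then $i=1$), whereas you first compute $\pd L=\dpt N+s-1$ via \ref{ADF} and then invoke the depth lemma to get $g=\dpt N+s$, reaching the single contradiction $\dpt(M\otimes_R N)=-s$. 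The content is the same.
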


\begin{prf*} Set $g=\Gdim(M)$. There is nothing to prove if $g=0$ so we may assume $g\ge 1$; see \ref{RH}(i). It follows from the depth lemma and (\ref{good}.1) that $g=\pd L+1$. Also note, by (ii), one has:
\begin{equation}\tag{\ref{thm4}.1}
g-\dpt N=-\dpt(M\otimes_RN).
\end{equation}

We will first prove $\GTor{i}{M}{N}=0$ for all $i\ge 2$. Consequently, if $g=1$,  there is nothing to prove. Hence assume $g\ge 2$.

Set $w=\sup\{i\in \ZZ \mid\Tor{i}{L}{N}\neq0\}$ and suppose $w \neq 0$.
As $w\le \pd L=g-1$ and $\Tor{w}{L}{N} \is \GTor{w+1}{M}{N}$, one has from (i) that $\dpt \Tor{w}{L}{N}=0$. Now \ref{ADF} applied to the pair $(L,N)$ yields:
\begin{equation}\tag{\ref{thm4}.2}
w=\pd L-\dpt N=g-\dpt N-1.
\end{equation}
Now (\ref{thm4}.1) and (\ref{thm4}.2) give $w=-\dpt(M\otimes_RN)-1<0$, i.e., a contradiction. So $w=0$ and hence $\GTor{i}{M}{N}=0$
for all $i\ge 2$ by (\ref{good}.3).

Next we will prove $\GTor{1}{M}{N}=0$. Suppose $\GTor{1}{M}{N}\neq0$.
Note, by (i), one has $\dpt \GTor{1}{M}{N}=0$. Thus one concludes from (\ref{good}.4) that $\dpt (L\otimes_RN)=0$. As $w=0$, the pair $(L,N)$ satisfies the depth formula of \ref{ADF}, and this yields $\dpt N=\pd L=g-1$, i.e., $\dpt N-g=-1$. Therefore, by (\ref{thm4}.1), one has $\dpt(M\otimes_RN)=-1$, i.e., a contradiction. Consequently $\GTor{1}{M}{N}$ must vanish and this finishes the proof.
\end{prf*}

The following is an immediate consequence of Theorem \ref{thm4}.

\begin{cor} \label{GTorref}
Let $M$ and $N$ be nonzero $R$-modules such that the depth formula holds, i.e., $\dpt{M} + \dpt{N} \deq  \dptR+ \dpt{\tpp{M}{N}}$. If $M$ has finite $\G$-dimension $g$ and $\GTor{i}{M}{N}$ has finite length for all $i=1, \ldots, g$, then $\GTor{i}{M}{N}=0$ for all $i\ge 1$.
\end{cor}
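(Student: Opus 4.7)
The plan is to deduce this corollary directly from Theorem \ref{thm4} by verifying its hypothesis (i), since hypothesis (ii) is exactly the depth formula assumption given here.

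First I would observe that, since $M$ has finite $\G$-dimension equal to $g$, item \ref{RH}(i) gives $\GTor{i}{M}{N}=0$ for every $i>g$, so those modules have depth $\infty$. It therefore suffices to inspect the indices $i=1,\ldots,g$. For each such $i$ the module $\GTor{i}{M}{N}$ is assumed to have finite length. A module of finite length is either zero (depth $\infty$) or has its support contained in $\{\m\}$, in which case $\m$ is an associated prime and the depth is $0$. Thus in every case $\dpt\GTor{i}{M}{N}\in\{0,\infty\}$, which is exactly condition (i) of Theorem \ref{thm4}.

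With both hypotheses of Theorem \ref{thm4} verified, the conclusion $\GTor{i}{M}{N}=0$ for all $i\ge 1$ follows immediately. I do not anticipate any obstacle: the only subtlety is the standard fact that a nonzero finite length module over a local ring has depth $0$, which is routine. The argument is short enough that it could even be folded into the statement as a one-line proof invoking Theorem \ref{thm4}.
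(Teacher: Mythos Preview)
Your argument is correct and matches the paper's own treatment: the paper simply states that the corollary is an immediate consequence of Theorem~\ref{thm4}, and your verification of hypothesis~(i) via \ref{RH}(i) and the finite-length assumption is exactly the intended reasoning.
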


We note next that $\G$-relative homology localizes. This allows us to give another necessary condition for the depth formula; see Corollary \ref{cor:2100}.

\begin{rmk} \label{rmkloc} Let $M$ be an $R$-module of finite $\G$-dimension and let $N$ be an $R$-module. Then $\GTor{i}{M}{N}_\p \dis \GTor[R_\p]{i}{M_\p}{N_\p}$ for all primes $\p$ and  all $i\in\ZZ$. To see this, let $\p$ be a prime ideal that is in the support of $M$ and $N$. Let $L\qra M$ be a $\G$-proper resolution of $M$ such that $L_i$ is projective for
  all $i\ge 1$ and $L_i=0$ for all $i\gg 0$. For all $i\ge 1$,  the syzygy
  $\Coker{((L_{i+1})_\p \to (L_{i})_\p)}$ is a module of
  finite projective dimension, so that $L_\p \qra M_\p$ is a $\G$-proper
  resolution of $M_\p$ by \cite[2.1]{CFH-06}. The localization isomorphism
  for $\G$-relative homology now follows from the standard isomorphism
  $\tpp{L}{N}_\p \is \tp[R_\p]{L_\p}{N_\p}$.
\end{rmk}

\begin{cor} \label{cor:2100} Let $M$ and $N$ be $R$-modules such that $M$ has finite $\G$-dimension. Assume $M_{\p}$ is a totally reflexive $R_{\p}$-module for all prime ideals $\p\ne \m$. If the depth formula holds, i.e., $\dpt{M} + \dpt{N} \deq  \dptR+ \dpt{\tpp{M}{N}}$, then $\GTor{i}{M}{N}=0$ for all $i\ge 1$.
\end{cor}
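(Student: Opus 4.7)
The plan is to reduce to Corollary~\ref{GTorref}: once I establish that $\GTor{i}{M}{N}$ has finite length for every $i=1,\dots,g:=\Gdim M$, that corollary, in view of the standing depth-formula hypothesis, will yield the vanishing immediately. The strategy for proving finite length is to combine the localization formula of Remark~\ref{rmkloc} with the totally reflexive hypothesis off $\m$ to force $\Supp\GTor{i}{M}{N}\subseteq\{\m\}$ for all $i\ge 1$.

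I would begin by noting that each $\GTor{i}{M}{N}$ is a finitely generated $R$-module. By~\ref{RH}, since $\Gdim M$ is finite, $M$ admits a $\G$-proper resolution $L \qra M$ with $L_i$ a finitely generated projective $R$-module for every $i\ge 1$ and with $L_i=0$ for $i>\Gdim M$. Then $\tp{L}{N}$ is a bounded complex of finitely generated $R$-modules, so its homology is finitely generated in each degree.

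Next I would pin down the support. Let $\p\ne\m$ be any prime. By Remark~\ref{rmkloc},
\[
\GTor{i}{M}{N}_\p \;\is\; \GTor[R_\p]{i}{M_\p}{N_\p} \quad \text{for every } i\in\ZZ,
\]
and by the hypothesis $M_\p$ is totally reflexive over $R_\p$, so the right-hand side vanishes for all $i\ge 1$ by~\ref{RH}(i). Hence $\Supp\GTor{i}{M}{N}\subseteq\{\m\}$ for every $i\ge 1$. Combining this with the previous paragraph, $\GTor{i}{M}{N}$ is a finitely generated $R$-module whose support is contained in $\{\m\}$, and thus has finite length, for all $i\ge 1$. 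In particular the finite-length hypothesis of Corollary~\ref{GTorref} is satisfied for $i=1,\dots,g$, and that corollary delivers $\GTor{i}{M}{N}=0$ for all $i\ge 1$.

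I do not anticipate a substantive obstacle: the technical work is already packaged in Corollary~\ref{GTorref} (itself extracted from Theorem~\ref{thm4}) and in the localization principle of Remark~\ref{rmkloc}. The only conceptual ingredient is the passage from \emph{totally reflexive off $\m$} to \emph{finite length of $\G$-relative Tor}, and that is essentially forced by the support computation above together with the basic vanishing of positive $\G$-relative Tor for totally reflexive modules.
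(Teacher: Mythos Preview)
Your proposal is correct and follows essentially the same route as the paper: localize via Remark~\ref{rmkloc}, use the totally reflexive hypothesis off $\m$ together with~\ref{RH}(i) to see that $\GTor{i}{M}{N}$ is supported only at $\m$ for $i\ge 1$, and then invoke the consequence of Theorem~\ref{thm4}. The only cosmetic difference is that the paper phrases the conclusion of the support computation as $\dpt\GTor{i}{M}{N}\in\{0,\infty\}$ and appeals directly to Theorem~\ref{thm4}, whereas you state it as finite length and cite Corollary~\ref{GTorref}; these are equivalent formulations of the same step.
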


\begin{prf*} For each prime ideal $\p\ne \m$, we have $\GTor{i}{M}{N}_\p \dis \GTor[R_\p]{i}{M_\p}{N_\p}=0$ for all $i\ge 1$; see \ref{RH}. Therefore $\dpt \GTor{i}{M}{N}\in \{0, \infty\}$ for all $i\ge 1$. Hence the result follows from Theorem \ref{thm4}.
\end{prf*}

The \emph{torsion submodule} $\ttpp R M$ of an $R$-module $M$ is the kernel of the natural homomorphism $M\to \text{Q}(R)\otimes_RM$, where $\text{Q}(R)$ is the total quotient ring of $R$. $M$ is said to have \emph{torsion} if $\ttpp RM\ne 0$, and said to be \emph{torsion-free} if $\ttpp RM
= 0$.

\begin{thm} \label{prp:217} Let $R$ be a two-dimensional Gorenstein normal local domain. If $I$ and $J$ are ideals of $R$, and $\tp{I}{J}$ has torsion, then $\GTor{i}{I}{J}=0$ for all $i\ge 1$.
\end{thm}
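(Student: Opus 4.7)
The strategy is to reduce to Corollary~\ref{cor:2100}. Since $R$ is Gorenstein, $I$ has finite $\G$-dimension. Since $R$ is a normal domain of dimension two, Serre's condition $(R_1)$ implies that $R_\p$ is regular of Krull dimension at most one for every prime $\p\ne\m$; hence $I_\p$, being an ideal in a field or a DVR, is free and so totally reflexive over $R_\p$. The first two hypotheses of Corollary~\ref{cor:2100} are therefore satisfied, and all that remains is to establish the depth formula $\dpt{I}+\dpt{J}=\dptR+\dpt{\tpp{I}{J}}$.

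I would first dispose of the reflexive cases. If $I$ is reflexive, then $I$ is maximal Cohen--Macaulay, and since $R$ is Gorenstein, $I$ is totally reflexive; so \ref{RH}(i) yields $\GTor{i}{I}{J}=0$ for all $i\ge 1$ with no appeal to the depth formula. Suppose instead $J$ is reflexive (hence totally reflexive) while $I$ is not. Since $\Gdim I\le 1$ and $\Gdim J=0$, Iacob's sequence~\ref{Iac} applied to each variable collapses, respectively, to $0\to\Ttor{1}{I}{J}\to\Tor{1}{I}{J}\to\GTor{1}{I}{J}\to 0$ and to an isomorphism $\Ttor{1}{J}{I}\is\Tor{1}{J}{I}$. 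Combining these with the symmetry $\Ttor{1}{I}{J}\is\Ttor{1}{J}{I}$ of Tate homology for modules of finite $\G$-dimension over a Gorenstein ring, together with the symmetry of ordinary Tor, a naturality square forces the comparison map $\Ttor{1}{I}{J}\to\Tor{1}{I}{J}$ to be an isomorphism, and hence $\GTor{1}{I}{J}=0$; the higher $\GTor{i}{I}{J}$ vanish because $\Gdim I\le 1$.

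In the remaining (main) case neither $I$ nor $J$ is reflexive, so $\dpt{I}=\dpt{J}=1$ (a nonzero ideal in a two-dimensional Cohen--Macaulay domain has depth $1$ or $2$, with $2$ if and only if it is reflexive). Because $R$ is regular in codimension one, $\tp{I_\p}{J_\p}$ is free (in particular torsion-free) over $R_\p$ for every $\p\ne\m$. Consequently the torsion submodule $T$ of $\tp{I}{J}$ is supported at $\{\m\}$, has finite length, and satisfies $\dpt{T}=0$; since $T\ne 0$ by hypothesis, the embedding $T\hookrightarrow\tp{I}{J}$ forces $\dpt{\tpp{I}{J}}=0$. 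The depth formula then reads $1+1=2+0$, and Corollary~\ref{cor:2100} delivers $\GTor{i}{I}{J}=0$ for all $i\ge 1$.

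The main obstacle is the reflexive-$J$ subcase handled in the second paragraph: the depth formula fails there outright (the torsion hypothesis pins $\dpt{\tpp{I}{J}}=0$, while $\dpt{I}+\dpt{J}-\dptR=1$), so Corollary~\ref{cor:2100} is not directly available and one must fall back on the (standard but nontrivial) symmetry of Tate homology for modules of finite $\G$-dimension over a Gorenstein ring to bridge through Iacob's sequence.
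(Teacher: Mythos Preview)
Your argument and the paper's are essentially the same in structure: both reduce to Theorem~\ref{thm4}/Corollary~\ref{cor:2100}, dispose of $\dpt{I}=2$ via \ref{RH}(i), and in the main case $\dpt{I}=\dpt{J}=1$ verify the depth formula by showing that the torsion hypothesis forces $\dpt{\tpp{I}{J}}=0$. Your verification (the torsion submodule is supported at $\m$, hence $\m\in\Ass(\tp{I}{J})$) is the contrapositive of the paper's (if depth is positive then every nonzerodivisor on $R$ is a nonzerodivisor on $\tp{I}{J}$); the content is identical.

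Where you diverge is the subcase $\dpt{I}=1$, $\dpt{J}=2$. The paper dismisses it with the word ``similarly,'' tacitly treating $\GTor{}{}{}$ as symmetric. You correctly note that the depth formula genuinely fails here, so Corollary~\ref{cor:2100} does not apply, and you propose to bridge via Iacob's sequence in each variable together with balance of Tate homology. The weak point is the sentence ``a naturality square forces the comparison map $\Ttor{1}{I}{J}\to\Tor{1}{I}{J}$ to be an isomorphism'': for this you need the balance isomorphisms for $\widehat{\operatorname{Tor}}$ and for $\operatorname{Tor}$ to \emph{intertwine} the comparison maps arising in Iacob's sequence for $(I,J)$ and for $(J,I)$. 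That compatibility is true, but it is not stated anywhere in the paper, and it does not follow from the mere existence of the two balance isomorphisms separately; you should either cite a precise source for it or supply the diagram chase.

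There is a cleaner way to handle this subcase that stays entirely within the paper's toolkit and avoids balance altogether. Since $\Gdim{I}=1$, the module $L$ in the $\G$-proper sequence $0\to L\to X\to I\to 0$ of Remark~\ref{good} is free (indeed $\pd{L}=\Gdim{I}-1=0$), so (\ref{good}.4) embeds $\GTor{1}{I}{J}$ into $L\otimes_R J\cong J^n$, which is torsion-free because $J$ is an ideal of a domain. On the other hand $\GTor{1}{I}{J}$ has finite length by Remark~\ref{rmkloc}. A finite-length submodule of a torsion-free module over a positive-dimensional domain is zero, so $\GTor{1}{I}{J}=0$; and $\GTor{i}{I}{J}=0$ for $i\ge 2$ since $\Gdim{I}\le 1$. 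Note this argument does not use the torsion hypothesis at all.
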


\begin{prf*} One may assume $I$ is nonzero. Notice that $R$ has positive depth, so one has $\dpt I \ge 1$. If $\dpt I=2$, then $I$ is totally reflexive, and $\GTor{i}{I}{J}=0$ for all $i\ge 1$; see \ref{RH}. Thus one may assume $\dpt I=1$. Similarly one may also assume $J$ is a nonzero ideal of depth one.

Notice, by Remark \ref{rmkloc}, $\GTor{i}{I}{J}$ has finite length for all $i\ge 1$. Hence, to establish the required conclusion, it suffices to prove, by Theorem \ref{thm4}, that the depth formula for $(I,J)$ holds, i.e., it suffices to prove $\dpt(I\otimes_{R}J)=0$.

Set $T=I\otimes_{R}J$ and suppose $\dpt T \ne 0$. For a prime ideal $\p$ of $R$  that has height one, $I_{\p}$ and $J_{\p}$ are maximal Cohen-Macaulay modules over the regular ring $R_{\p}$, and hence both $I_{\p}$ and $J_{\p}$ are free. Thus one has $\dpt[R_\p]T_{\p}\geq \min\{1, \dim R_{\p}\}$ for all prime ideals $\p$ of $R$. Now let $x$ be a non-zero divisor on $R$. Suppose $x$ is contained in the set of zero-divisors of $T$. Then $x\in \mathfrak{q}$ for some associated prime $\mathfrak{q}$ of $T$, and hence $0=\dpt[R_\mathfrak{q}]T_{\mathfrak{q}}\geq \min\{1, \dim R_{\mathfrak{q}}\}$. This shows $\mathfrak{q}$ is a minimal prime ideal of $R$ and yields a contradiction since $\mathfrak{q}$ contains a non-zero divisor. So $x$ is a non-zero divisor on $T$. In other words $T$ is torsion-free. This gives a contradiction to our assumption that $T$ has torsion. Therefore $\dpt T =0$.
\end{prf*}

Tensor products of finitely generated nonzero modules tend to have torsion in general. Therefore -- thanks to \prpref{217} -- examples of ideals $I$ and $J$ for which $\GTor{\ast}{I}{J}$ vanish are abundant. In particular one has:

\begin{cor} \label{cor:211} Let $R$ be a two-dimensional Gorenstein normal local domain. If $I$ is an ideal of $R$, then $\GTor{i}{I}{I}=0$ for all $i\ge 1$. In particular, if $R$ is not regular, one has $\GTor{i}{\m}{\m}=0\ne \Tor{i}{\m}{\m}$ for all $i\ge 1$.
\end{cor}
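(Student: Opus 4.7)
The plan is to reduce Corollary~\ref{cor:211} to Theorem~\ref{prp:217} applied with $J=I$. If $I=0$ the statement is vacuous; if $I$ is nonzero and principal, then $I\is R$ (since $R$ is a domain) and the vanishing follows from $\GTor{i}{R}{R}=0$ for every $i\ge 1$. So assume $I$ is nonzero and non-principal. Because $R$ is two-dimensional Gorenstein, the Auslander--Bridger formula gives $\dpt I+\Gdim I=2$; if $\dpt I=2$, then $\Gdim I=0$, so $I$ is totally reflexive and \ref{RH}(i) closes the case. It remains to treat $\dpt I=1$, where I aim to apply Theorem~\ref{prp:217}. The one thing to verify is that $\tp{I}{I}$ has torsion.

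For this, I would first note that $R$ being a two-dimensional normal local domain forces $R_\p$ to be regular for every prime $\p\ne\m$, so the torsion-free $R_\p$-module $I_\p$ is free of rank one and consequently $\tpp{I}{I}_\p\is R_\p$; thus any torsion submodule of $\tp{I}{I}$ has support in $\{\m\}$ and finite length. Tensoring the short exact sequence $0\to I\to R\to R/I\to 0$ with $I$ yields the four-term exact sequence
\begin{equation*}
0\lra \Tor{1}{R/I}{I}\lra \tp{I}{I}\lra I^2\lra 0,
\end{equation*}
in which the kernel $\Tor{1}{R/I}{I}$ is annihilated by the nonzero ideal $I$, hence is torsion in $\tp{I}{I}$. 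The main obstacle is therefore showing that $\Tor{1}{R/I}{I}\ne 0$---equivalently, that the multiplication map $\tp{I}{I}\to I^2$ fails to be injective when $I$ is non-principal. Granting this, Theorem~\ref{prp:217} delivers $\GTor{i}{I}{I}=0$ for all $i\ge 1$.

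For the ``in particular'' clause, $R$ being non-regular forces $\m$ to be non-principal (a principal maximal ideal in a Noetherian local domain forces $\dim R\le 1$), so the first part already gives $\GTor{i}{\m}{\m}=0$ for $i\ge 1$. The non-vanishing of $\Tor{i}{\m}{\m}$ follows from two iterations of the dimension-shift through $0\to\m\to R\to k\to 0$: for $i\ge 1$ one obtains $\Tor{i}{\m}{\m}\is \Tor{i+1}{\m}{k}\is \Tor{i+2}{k}{k}$, and $\Tor{j}{k}{k}\ne 0$ for every $j\ge 0$ since $\pd k=\infty$ when $R$ is not regular.
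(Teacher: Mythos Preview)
Your overall strategy matches the paper's: reduce to Theorem~\ref{prp:217} by showing that $\tp{I}{I}$ has torsion whenever $I$ is non-principal. However, you leave the decisive step unproved---you explicitly write ``granting this'' for the claim that the multiplication map $\tp{I}{I}\to I^2$ fails to be injective when $I$ is non-principal. That claim is the entire content of the corollary beyond Theorem~\ref{prp:217}, so as written the proposal has a genuine gap.

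The paper closes this gap by a minimal-generator count (citing Huneke--Wiegand \cite[p.~467]{HW}). Argue contrapositively: if the multiplication map is injective, then since $I^2\subseteq R$ is torsion-free and the kernel is torsion (as you observed), the map is an isomorphism $\tp{I}{I}\is I^2$. Set $n=\mu(I)$. Tensoring with $k$ gives $\mu(\tp{I}{I})=\dim_k\big((I/\m I)\otimes_k(I/\m I)\big)=n^2$, while $I^2$ is generated by the products $a_ia_j$ with $i\le j$, so $\mu(I^2)\le \binom{n+1}{2}=n(n+1)/2$. Hence $n^2\le n(n+1)/2$, forcing $n\le 1$, i.e.\ $I$ is principal. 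Insert this argument and your proof is complete. Your separate handling of the case $\dpt I=2$ is harmless but unnecessary, since Theorem~\ref{prp:217} already disposes of the totally reflexive case internally; and your treatment of the ``in particular'' clause via two dimension shifts to $\Tor{i+2}{k}{k}$ is correct and in fact more explicit than the paper, which leaves that part to the reader.
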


\begin{prf*} One may assume $I\ne 0$. If $\tp{I}{I}$ is torsion-free, then $\tp{I}{I} \is I^2$ so, by counting the minimal number of generators of $I$, one can see that $I$ must be principal, i.e., $I \is R$; see \cite[page 467]{HW}. Thus one may assume $\tp{I}{I}$ has torsion. In that case the result follows from Theorem \ref{prp:217}.
\end{prf*}

\corref{211} can be used to show $\GTor{\ast}{I}{I}=0\ne \Tor{\ast}{I}{I}$ for some nonmaximal ideals $I$. We give such an example next.

\begin{exa} \label{exa:3} Let $R=\CC[\![x,y,z]\!]/(xy-z^2)$ and let $I=(x,z)$ be the ideal of $R$ generated by $x$ and $z$. Note $R$ is a two-dimensional Gorenstein normal local domain that is not regular, so one has $\GTor{i}{I}{I}=0$ for all $i\ge 1$; see \corref{211}. If $\Tor{n}{I}{I}=0$ for some nonnegative integer $n$, then $\Tor{i}{I}{I}=0$ for all $i \ge n$, and this forces $I$ to have finite projective dimension; see \cite[5.1]{HW} and \cite[1.9]{HW2}.
However $R/I \cong \CC[\![y]\!]$, and hence $\dpt R/I=1$ and $\dpt I=2$, i.e., $I$ is maximal Cohen-Macaulay. Therefore $I$ does not have finite projective dimension, and so $\Tor{i}{I}{I}\ne 0$ for all $i\ge 0$.
\end{exa}

\subsection*{On Jorgensen's dependency formula}

Let $M$ be an $R$-module that has finite complete intersection dimension \cite{AGP} (e.g., $R$ is a complete intersection ring). If $N$ is an $R$-module with $\Tor{i}{M}{N}=0$ for all $i\gg 0$ and $q=\sup{\setof{i}{\Tor{i}{M}{N}\ne 0}}$, Jorgensen \cite[2.2]{DAJ99} proved an equality, which he referred to as the \emph{dependency formula}:
  \begin{equation*} q=
    \sup_{}\{\dptR_\p - \dpt[R_\p]{M_\p} - \dpt[R_\p]{N_\p} \; \arrowvert\; \p\in \Supp(M)\cap \Supp(N)\}.
\end{equation*}
Our final application of \thmref{gdepth} is a $\G$-relative version of Jorgensen's dependency formula, which we reach in  \corref{gadepth2}.

\begin{cor}
  \label{cor:gadepth}
  Let $M$ be an $R$-module of finite $\G$-dimension and let $N$ be an
  $R$-module with $\Ttor{i}{M}{N}=0$ for all $i \le 0$. Set
  $s=\sup{\setof{i}{\GTor{i}{M}{N}\ne 0}}$. Then the following conditions hold:
\begin{enumerate}[\rm(i)]
\item $\dpt{M} + \dpt{N}  \ge  \dptR - s$.
\item $\dpt{M} + \dpt{N} = \dptR-s$ if and only if $\dpt{\GTor{s}{M}{N}}=0$.
\item If $s=0$ or $\dpt{\GTor{s}{M}{N}}\le 1$, then one
  has
  \begin{equation*}
  \dpt{M}+ \dpt{N} = \dptR + \dpt{\GTor{s}{M}{N}} -s.
  \end{equation*}
\end{enumerate}
\end{cor}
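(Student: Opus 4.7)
The strategy is to combine \thmref{gdepth} with the structural exact sequence $(\ref{good}.1)$. \thmref{gdepth} gives the identity
\[\dpt{M} + \dpt{N} \deq \dptR + \dpt{\GLtpp{M}{N}},\]
so the three assertions reduce, respectively, to: $\dpt{\GLtpp{M}{N}} \ge -s$; $\dpt{\GLtpp{M}{N}} = -s$ if and only if $\dpt{\GTor{s}{M}{N}} = 0$; and $\dpt{\GLtpp{M}{N}} = \dpt{\GTor{s}{M}{N}} - s$ when $s = 0$ or $\dpt{\GTor{s}{M}{N}}\le 1$.

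First, to handle (i), I would induct on $s$ via the distinguished triangle
\[\tau_{<s}\GLtpp{M}{N} \lra \GLtpp{M}{N} \lra \Shift[s]{\GTor{s}{M}{N}}\]
in the derived category. The inductive estimate $\dpt{\tau_{<s}\GLtpp{M}{N}} \ge 1-s$, combined with $\dpt{\Shift[s]{\GTor{s}{M}{N}}} = \dpt{\GTor{s}{M}{N}} - s \ge -s$, yields $\dpt{\GLtpp{M}{N}} \ge -s$. Applying $\RHom{k}{-}$ to the same triangle and using the vanishing of $\Ext{-s}{k}{\tau_{<s}\GLtpp{M}{N}}$ (which holds since $-s < 1-s \le \dpt{\tau_{<s}\GLtpp{M}{N}}$) produces an injection $\Ext{-s}{k}{\GLtpp{M}{N}} \hookrightarrow \Hom{k}{\GTor{s}{M}{N}}$; this settles the forward direction of (ii).

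For (iii), and hence the reverse direction of (ii), I would pass to the $\G$-proper sequence $(\ref{good}.1)$, $0\to L\to X\to M\to 0$, where $\pd L = \Gdim M - 1$ and $X$ is totally reflexive. By $(\ref{good}.3)$, $\Tor{i}{L}{N} \is \GTor{i+1}{M}{N}$ for $i \ge 1$, so the pair $(L, N)$ has $\sup\{i : \Tor{i}{L}{N} \ne 0\} = s - 1$ and $\dpt{\Tor{s-1}{L}{N}} = \dpt{\GTor{s}{M}{N}}$. Thus the hypothesis of (iii) transfers to a hypothesis under which Auslander's formula \ref{ADF} applies to $(L, N)$, giving
\[\dpt{L} + \dpt{N} \deq \dptR + \dpt{\Tor{s-1}{L}{N}} - (s-1);\]
substituting $\dpt{L} = \dpt{M}+1$ (from Auslander--Buchsbaum and Auslander--Bridger) rearranges this to the desired equality for $s \ge 2$. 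The case $s = 0$ is immediate from $\GLtpp{M}{N} \qis M\otimes_R N$.

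The most delicate case is $s = 1$. Here $\Tor{i}{L}{N} = 0$ for all $i \ge 1$, so \ref{ADF} applies with $q = 0$ and gives $\dpt(L\otimes_R N) = \dpt{N} - \Gdim M + 1$; the assertion (iii) then reduces to the identity $\dpt{\GTor{1}{M}{N}} = \dpt(L\otimes_R N)$. I plan to extract this from the four-term exact sequence $(\ref{good}.4)$, split into
\[0\to\GTor{1}{M}{N}\to L\otimes_R N\to Y\to 0 \quad \text{and} \quad 0\to Y\to X\otimes_R N\to M\otimes_R N\to 0,\]
by a depth-lemma case analysis on $\dpt{\GTor{1}{M}{N}}\in\{0,1\}$; sub-cases that would give a strict inequality $\dpt{\GTor{1}{M}{N}} \ne \dpt(L\otimes_R N)$ can be excluded by invoking the already-established forward direction of (ii). This $s = 1$ analysis is where I expect the main obstacle to arise.
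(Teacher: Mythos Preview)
Your reduction to statements about $\dpt{\GLtpp{M}{N}}$ via \thmref{gdepth} is exactly what the paper does. After that, however, the paper proceeds very differently: it simply cites three off-the-shelf facts about depth of complexes. For (i) it uses $\dpt C \ge -\sup\{i:\H[i]{C}\ne 0\}$ from \cite[1.7(2)]{LLAHBF91}; for (ii) it cites \cite[1.5(3)]{HBFSIn03}; and for (iii) it observes that when $s=0$ or $\dpt\GTor{s}{M}{N}\le 1$ one has $\dpt\GTor{s}{M}{N}-s\le \dpt\GTor{i}{M}{N}-i$ for all $i\le s$, and then invokes Iyengar's depth formula for complexes \cite[2.3]{SIn99}. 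The entire proof is a few lines.

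Your route is more elementary, re-deriving these complex-level facts by hand. Your induction for (i) and the truncation-triangle argument for the forward direction of (ii) are fine (indeed they essentially reprove the cited results in this special case). Your treatment of (iii) for $s\ge 2$ via the structural sequence $0\to L\to X\to M\to 0$ and Auslander's classical formula for $(L,N)$ is correct and has the pleasant feature that it does not use the Tate hypothesis at all.

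The one genuine gap is the $s=1$ case. Your claim that the bad sub-cases in the depth-lemma analysis ``can be excluded by invoking the already-established forward direction of (ii)'' is not sufficient. Concretely, in the sub-case $\dpt\GTor{1}{M}{N}=1$ you get $\dpt(L\otimes_R N)\ge 1$ from (i) and forward (ii), but nothing you have written rules out $\dpt(L\otimes_R N)\ge 2$: the first short exact sequence then forces $\dpt Y=0$, and the depth lemma gives no contradiction. To close this you must use the second sequence $0\to Y\to X\otimes_R N\to M\otimes_R N\to 0$ together with $\dpt(X\otimes_R N)=\dpt N$, which comes from \corref{general1} applied to $X$ (this is where the Tate hypothesis re-enters, since $\Ttor{i}{X}{N}\cong\Ttor{i}{M}{N}$). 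With that extra input the case analysis does go through, but as written your sketch is incomplete at precisely the point you flagged as delicate.
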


\begin{prf*}
  By the definition of depth and \rmkcite[1.7.(2)]{LLAHBF91} one has
  \begin{equation*}
    \dpt{\GLtpp{M}{N}} \dge \inf\setof{i}{\H[-i]{\GLtp{M}{N}}\ne 0} = -s\:,
  \end{equation*}
  so (i) follows from \thmref{gdepth}. By noting $\GTor{s}{M}{N} = \H[s]{\GLtp{M}{N}}$, one can see that (ii) follows from \cite[1.5.(3)]{HBFSIn03}.

Finally if $s=0$ or $\dpt{\GTor{s}{M}{N}}\le 1$, then one has
\begin{equation*}
\dpt{\GTor{s}{M}{N}} - s \le \dpt{\GTor{i}{M}{N}}- i, \text{ for all } i\le s,
  \end{equation*}
and the formula in (iii) follows from \thmref{gdepth} and \thmcite[2.3]{SIn99}.
\end{prf*}

\begin{cor}
  \label{cor:gadepth2} Let $M$ and $N$ be $R$-modules such that $M$ has finite $\G$-dimension. Set
$m=\sup_{}\{\dptR_\p - \dpt[R_\p]{M_\p} - \dpt[R_\p]{N_\p} \; \arrowvert\; \p\in \Supp(M)\cap \Supp(N)\}$ and $s=\sup{\setof{i}{\GTor{i}{M}{N}\ne 0}}$.
Then $s \leq m$. If furthermore, $\Ttor{i}{M}{N}=0$ for all $i \le 0$, then  $s=m$.
\end{cor}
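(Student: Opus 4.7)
My plan is to establish the two inequalities $m \le s$ (under Tate vanishing) and $s \le m$ (unconditionally) separately.

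For the easier direction $m \le s$ under Tate vanishing: both $\G$-relative Tor (Remark \ref{rmkloc}) and Tate homology commute with localization, and finite $\G$-dimension is preserved under localization. Thus for every $\p \in \Supp M \cap \Supp N$ the pair $(M_\p, N_\p)$ over $R_\p$ satisfies the hypotheses of Corollary \ref{cor:gadepth}. Part~(i) of that corollary gives $\dpt[R_\p]{M_\p} + \dpt[R_\p]{N_\p} \ge \dptR_\p - s_\p$, where $s_\p = \sup\setof{i}{\GTor[R_\p]{i}{M_\p}{N_\p} \ne 0} \le s$. Rearranging and taking the supremum over $\p$ yields $m \le s$.

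For the unconditional inequality $s \le m$, I would exploit the $\G$-proper sequence $0 \to L \to X \to M \to 0$ from $(\ref{good}.1)$, with $X$ totally reflexive and $\pd L<\infty$, and reduce to the classical dependency formula for the finite projective dimension pair $(L,N)$ (a consequence of \ref{ADF}). In the main case $s \ge 2$, the identification $\GTor{i+1}{M}{N} \is \Tor{i}{L}{N}$ for $i \ge 1$ from $(\ref{good}.3)$ yields $\sup \setof{i}{\Tor{i}{L}{N} \ne 0} = s-1$. Choose $\p_0$ minimal in $\Supp \Tor{s-1}{L}{N}$; then $\Tor[R_{\p_0}]{s-1}{L_{\p_0}}{N_{\p_0}}$ has finite length, hence depth zero, and \ref{ADF} applied at $R_{\p_0}$ gives $\dpt[R_{\p_0}]{L_{\p_0}} + \dpt[R_{\p_0}]{N_{\p_0}} = \dptR_{\p_0} - (s-1)$. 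The non-vanishing of $\GTor[R_{\p_0}]{s}{M_{\p_0}}{N_{\p_0}}$ forces $\Gdim[R_{\p_0}]{M_{\p_0}} \ge 1$, and a standard analysis of the long exact sequence of $\operatorname{Ext}(-, R_{\p_0})$ applied to the localized sequence yields $\Gdim[R_{\p_0}]{M_{\p_0}} = \pd[R_{\p_0}]{L_{\p_0}} + 1$. Combining with the Auslander--Bridger and Auslander--Buchsbaum formulas gives $\dpt[R_{\p_0}]{L_{\p_0}} - \dpt[R_{\p_0}]{M_{\p_0}} = 1$, and substituting produces $\dptR_{\p_0} - \dpt[R_{\p_0}]{M_{\p_0}} - \dpt[R_{\p_0}]{N_{\p_0}} = s$, so $m \ge s$.

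The small cases $s \le 1$ require separate (but short) arguments: if $s = -\infty$ the statement is vacuous; for $s = 0$ choose $\p$ minimal in $\Supp \tpp{M}{N}$, so that $M_\p, N_\p$ have finite length and $\dptR_\p - \dpt[R_\p]{M_\p} - \dpt[R_\p]{N_\p} = \dptR_\p \ge 0$; for $s = 1$ choose $\p \in \Ass \GTor{1}{M}{N}$, use the injection $\GTor{1}{M}{N} \hookrightarrow \tp{L}{N}$ from $(\ref{good}.4)$ together with the depth lemma to deduce $\dpt[R_\p]{\tpp{L_\p}{N_\p}} = 0$, then combine with the classical depth formula for the Tor-independent pair $(L_\p,N_\p)$ and the identity $\dpt[R_\p]{L_\p} = \dpt[R_\p]{M_\p} + 1$. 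The principal obstacle in the argument is verifying $\Gdim[R_{\p_0}]{M_{\p_0}} = \pd[R_{\p_0}]{L_{\p_0}} + 1$, which fails at primes where $M_{\p_0}$ is totally reflexive; this is precisely why $\p_0$ must be chosen from $\Supp \GTor{s}{M}{N}$, where $M_{\p_0}$ provably has positive $\G$-dimension.
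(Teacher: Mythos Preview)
Your approach for $s \ge 1$ and for the direction $m \le s$ under Tate vanishing is essentially that of the paper. The paper streamlines the execution in two ways. First, it treats all $s \ge 1$ at once: choosing $\p \in \Ass_R(\GTor{s}{M}{N})$ and invoking both $(\ref{good}.3)$ and the injection in $(\ref{good}.4)$, one obtains $\p \in \Ass_R(\Tor{s-1}{L}{N})$ and $\sup\setof{i}{\Tor{i}{L}{N}\ne 0}=s-1$ without splitting off the case $s=1$. Second, it deduces $\dpt[R_\p]{L_\p} = \dpt[R_\p]{M_\p} + 1$ directly from the depth lemma applied to the localized sequence $0 \to L_\p \to X_\p \to M_\p \to 0$, using only that $\dpt[R_\p]{X_\p} = \dptR_\p > \dpt[R_\p]{M_\p}$; this is shorter than your route through the $\operatorname{Ext}(-,R_\p)$ long exact sequence followed by the Auslander--Bridger and Auslander--Buchsbaum formulas. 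Otherwise the two arguments coincide.

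Your $s = 0$ argument, however, contains a genuine gap. If $\p$ is minimal in $\Supp\tpp{M}{N} = \Supp M \cap \Supp N$, it does \emph{not} follow that $M_\p$ and $N_\p$ have finite length, because $\p$ need not be minimal in $\Supp M$ or in $\Supp N$ individually. For instance, take $R = k[\![x,y]\!]/(xy)$, $M = R/(x)$, $N = R/(y)$ as in \exaref{1}: here $\Supp M \cap \Supp N = \{\m\}$, yet $\dpt{M} = \dpt{N} = 1$, so the quantity $\dptR_\p - \dpt[R_\p]{M_\p} - \dpt[R_\p]{N_\p}$ at $\p=\m$ equals $-1$, not $\dptR_\p$. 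The paper does not supply an argument for the case $s = 0$ either; it simply asserts there is nothing to prove when $\Gdim{M}=0$ and then works under the standing assumption $s \ge 1$.
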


\begin{prf*} There is nothing to prove if $\Gdim(M)=0$. So assume $s\ge 1$ and $\Gdim(M)\ge 1$. Let $\p\in\Ass_R(\GTor{s}{M}{N})$. Then it follows from (\ref{good}.3) and (\ref{good}.4) that $\p\in\Ass_R(\Tor{s-1}{L}{N})$, $\sup\{i\mid\Tor{i}{L}{N}\neq0\}=s-1$ and $\Gdim(M_\p)\ge 1$. Using Theorem \ref{ADF} for the pair $(L_\p, N_\p)$, one has
\begin{equation*}\tag{\ref{cor:gadepth2}.1}
\dpt[R_\p]{L_\p}+\dpt[R_\p]{N_\p}=\dptR_\p-(s-1)=\dptR_\p-s+1.
\end{equation*}
Localizing (\ref{good}.1) at $\p$, and noting $\dpt[R_\p]{X_\p}=\dptR_\p>\dpt[R_\p]{M_\p}$, one concludes by the depth lemma that $\dpt[R_\p]{L_\p}=\dpt[R_\p]{M_\p}+1$. Using this equality in  (\ref{cor:gadepth2}.1), one obtains
$s=\dptR_\p-\dpt[R_\p]{M_\p}-\dpt[R_\p]{N_\p}$. This shows $s\le m$.
Now assume $\Ttor{i}{M}{N}=0$ for all $i \le 0$. Then it follows $\Ttor[R_\p]{i}{M_\p}{N_\p}=0$ for all prime ideals $\p$ of $R$, and for all $i \le 0$. This gives, by Remark \ref{rmkloc} and \corref{gadepth}, that
  \begin{equation*}
     s \ge \sup{\setof{i}{\GTor[R_\p]{i}{M_\p}{N_\p}\ne 0}}
    \ge \dptR_\p - \dpt[R_\p]{M_\p} - \dpt[R_\p]{N_\p}.
  \end{equation*}
Therefore $s\ge m$, and hence $s=m$ holds.
\end{prf*}

We do not know whether or not the vanishing of negative Tate homology in \corref{gadepth2} is necessary. Hence it seeems worth posing the next question.

\begin{que}  Let $M$ and $N$ be $R$-modules such that $M$ has finite $\G$-dimension. Set
$m=\sup_{}\{\dptR_\p - \dpt[R_\p]{M_\p} - \dpt[R_\p]{N_\p} \; \arrowvert\; \p\in \Supp(M)\cap \Supp(N)\}$ and $s=\sup{\setof{i}{\GTor{i}{M}{N}\ne 0}}$.
 Then must one have $s = m$?
\end{que}

\section*{Acknowledgments}
The research of Liang was partly supported by NSF of China with grant No. 11301240, NSF of Gansu Province with grant No.1506RJZA075 and SRF for ROCS, SEM. The research of Sadeghi was supported by a grant from IPM and a grant from Iran National Science Foundation (INSF) with grant No.94027467.

Part of this work was completed during a visit by Liang to the Texas Tech University; he is grateful for the kind hospitality of the Texas Tech Department of Mathematics and Statistics.

Celikbas is grateful to Jerzy Weyman for his support during the preparation of this article. The authors thank Lars Winther Christensen for his help, comments and suggestions and Greg Piepmeyer for his feedback on the manuscript. The authors also thank the referee for his/her careful reading and many useful comments.

\def\soft#1{\leavevmode\setbox0=\hbox{h}\dimen7=\ht0\advance \dimen7
  by-1ex\relax\if t#1\relax\rlap{\raise.6\dimen7
  \hbox{\kern.3ex\char'47}}#1\relax\else\if T#1\relax
  \rlap{\raise.5\dimen7\hbox{\kern1.3ex\char'47}}#1\relax \else\if
  d#1\relax\rlap{\raise.5\dimen7\hbox{\kern.9ex \char'47}}#1\relax\else\if
  D#1\relax\rlap{\raise.5\dimen7 \hbox{\kern1.4ex\char'47}}#1\relax\else\if
  l#1\relax \rlap{\raise.5\dimen7\hbox{\kern.4ex\char'47}}#1\relax \else\if
  L#1\relax\rlap{\raise.5\dimen7\hbox{\kern.7ex
  \char'47}}#1\relax\else\message{accent \string\soft \space #1 not
  defined!}#1\relax\fi\fi\fi\fi\fi\fi} \def\cprime{$'$}
  \providecommand{\arxiv}[2][AC]{\mbox{\href{http://arxiv.org/abs/#2}{\sf
  arXiv:#2 [math.#1]}}}
  \providecommand{\oldarxiv}[2][AC]{\mbox{\href{http://arxiv.org/abs/math/#2}{\sf
  arXiv:math/#2
  [math.#1]}}}\providecommand{\MR}[1]{\mbox{\href{http://www.ams.org/mathscinet-getitem?mr=#1}{#1}}}
  \renewcommand{\MR}[1]{\mbox{\href{http://www.ams.org/mathscinet-getitem?mr=#1}{#1}}}
\providecommand{\bysame}{\leavevmode\hbox to3em{\hrulefill}\thinspace}
\providecommand{\MR}{\relax\ifhmode\unskip\space\fi MR }
\providecommand{\MRhref}[2]{%
  \href{http://www.ams.org/mathscinet-getitem?mr=#1}{#2}
}
\providecommand{\href}[2]{#2}

\end{document}